\definecolor{brickred}{rgb}{0.8, 0.25, 0.33}
\definecolor{forestgreen}{rgb}{0.13, 0.55, 0.13}
\definecolor{anna}{rgb}{0.01, 0.28, 1.0}
\newtheorem{theorem}{\bf Theorem}[section]
\newtheorem{lemma}[theorem]{\bf Lemma}
\newtheorem{definition}[theorem]{\bf Definition}
\newtheorem{remark}[theorem]{\bf Remark}
\newtheorem{proposition}[theorem]{\bf Proposition}
\newcommand{\R}{\mathbb{R}}
\def \L {\mathscr{L}}
\def \K {\mathscr{K}}
\def \epsilon {{\varepsilon}}
\def \l {{\lambda}}
\def \z {{\zeta}}
\def \phi {{\varphi}}
\def \loc {{\text{\rm loc}}}
\def\p{\partial}
\def \tilde {\widetilde}
\begin{document}
\title[Asymptotic lower bounds for the relativistic Fokker-Planck operator]{Harnack inequality and asymptotic lower bounds for the relativistic Fokker-Planck operator}

\author{Francesca Anceschi}
\address{Dipartimento di Ingegneria Industriale e Scienze Matematiche - Università Politecnica delle Marche: 
via Brecce Bianche, 12  60131 Ancona, Italy}
\email{f.anceschi@staff.univpm.it}
	
\author{Sergio Polidoro}
\address{Dipartimento di Scienze Fisiche, Informatiche e Matematiche - Università degli Studi di Modena e Reggio Emilia:
via G. Campi, 413/B 41125 Modena, Italy}
\email{sergio.polidoro@unimore.it}
	
\author{Annalaura Rebucci}
\address{Dipartimento di Scienze Matematiche, Fisiche e Informatiche - Università degli Studi di Modena e Reggio Emilia:
via G. Campi, 413/B 41125 Modena, Italy}
\email{annalaura.rebucci@unipr.it}
	
\date{\today}

\begin{abstract}
We consider a class of second order degenerate kinetic operators $\L$ in the framework of special relativity. We first describe $\L$ as an H\"ormander operator which is invariant with respect to Lorentz transformations. Then we prove a Lorentz-invariant Harnack type inequality, and we derive accurate asymptotic lower bounds for positive solutions to $\L f = 0$. As a consequence we obtain a lower bound for the density of the relativistic stochastic process associated to $\L$. 
	
\medskip 
\noindent
{\bf Key words: special relativity, Kolmogorov equations, fundamental solution, Harnack inequalities, asymptotic bounds.}	

\medskip
\noindent	
{\bf AMS subject classifications: 35K70, 35Q75, 35Q84, 35A08}
		
\end{abstract}
	
\maketitle
	
\tableofcontents
\setcounter{equation}{0}\setcounter{theorem}{0}	
\section{Introduction}
\label{intro}

This work is devoted to the study of the following second order partial differential equation (PDE in the following)
\begin{equation} \label{oprel}
    \L f (p,y,t) =  {\sqrt{|p|^{2} + 1}}\, \nabla_p \cdot \left( D \, \nabla_p f\right) 
    - p \cdot \nabla_y f - \sqrt{|p|^{2} + 1} \, \p_t f = 0,
\end{equation}
where $(p,y,t) \in \R^{2d+1}$ and $D$ is the \textit{relativistic diffusion matrix} given by 
\begin{equation*}
 D=\frac{1}{\sqrt{|p|^{2} + 1}}\left( \mathbb{I}_d+ p\otimes p \right).
\end{equation*}
Here and in the following, $\mathbb{I}_d$ denotes the $d \times d$ identity matrix and $p\otimes p = \left( p_i p_j \right)_{i,j=1, \dots, d}$. In this context, a solution $f=f(p,y,t)$ to \eqref{oprel} denotes the density of particles in the phase space with momentum $p$ and position $y$, at time $t$.

We observe that $\L$ is a strongly degenerate differential operator, since only second order derivatives with respect to the momentum variable $p \in \R^d$ appear. However, the first order part of $\L$ induces a strong regularizing property. More precisely, $\L$ is hypoelliptic, namely every distributional solution $u$ to $\L u =f$ defined in some open set $\Omega \subset \mathbb{R}^{2d+1}$ belongs to $C^\infty(\Omega) $, whenever $f$ belongs to $C^\infty(\Omega)$. As a consequence, we only need to consider classical solutions to $\L f = 0$. Indeed, as we will show in Appendix \ref{sec-Horm}, we can write $\L$ in the form
\begin{equation} \label{hormander-op}
	\L := \sum_{j=1}^d X_j^{2} + X_{d+1},
\end{equation}
with
\begin{equation} \label{vector-fields}
 X_j = \sum_{k=1}^{d} \left( \delta_{jk} +\tfrac{p_j p_k}{1 + \sqrt{|p|^{2} + 1}}\right) \tfrac{\p  }{ \p p_k}, 
 \quad j=1, \dots, d, \quad  \text{and} \quad X_{d+1} = \sum_{k=1}^{d} c_k(p) X_k - Y, 
\end{equation}
where $c_1, \dots c_d$ are smooth functions and 
\begin{equation} \label{vector-fieldY}
		Y =  p \cdot \nabla_{y} + {\sqrt{|p|^{2} + 1}} \, \tfrac{\p}{\p t}.
\end{equation}
Moreover, $\L$ does satisfy the H\"ormander's rank condition \cite{H}
\begin{equation}\label{hormander}
	{\rm rank}\, {\rm Lie} \, \lbrace X_1,\ldots, X_d ,X_{d+1} \rbrace \,(p,y,t)=2d+1, 
	\qquad \forall (p,y,t) \in \mathbb{R}^{2d+1}, 
\end{equation} 
which is a well-known criterion for the hypoellipticity of an operator in the form \eqref{hormander-op}. We recall that in \eqref{hormander}, ${\rm Lie} \, \lbrace X_1,\ldots,X_d, X_{d+1} \rbrace$ denotes the vector space generated by the vector fields $\lbrace  X_1,\ldots, X_d , X_{d+1} \rbrace$ and by their commutators. 

\medskip

The aforementioned regularity property of operator $\L$ is related to a non-Euclidean structure on the space $\R^{2d +1}$ and its study needs to be addressed via an \emph{ad hoc} approach. In particular, as we will see in the sequel, $\L$ is the relativistic version of a kinetic diffusion operator and it is invariant with respect to Lorentz transformations. Moreover, $\L$ is the Kolmogorov equation of a suitable relativistic stochastic process $\left(P_{s}, Y_s, T_s \right)_{s \ge 0}$, that will be introduced in \eqref{RSDE} below. 
Our main result is a lower bound for the density of the stochastic process $\left(P_{s}, Y_s, T_s \right)_{s \ge 0}$. This is the first step in developing a systematic study of $\L$ in its appropriate framework of the PDEs theory. 
Indeed, our final aim is to extend the classical theory considered in \cite{APsurvey} to the relativistic case. In particular, we plan to prove asymptotic results such as \cite{LP, PaPo, LPP, AR-funsol} in this more general setting. 

\medskip

As we will see in Appendix \ref{sec-Horm}, the treatment of operator $\L$ in dimension $d>1$ involves cumbersome notation and computations. Thus, for the sake of simplicity, we restrict ourselves to the one-dimensional case, and we expect that the corresponding generalization of our main results to higher dimension does not imply substantial difficulties. In the one-dimensional case $\L$ writes in the following form
\begin{equation} \label{L1d}
 \L f (p,y,t) = {\sqrt{p^{2} + 1}}\, \tfrac{\partial}{\partial p} 
 \left({\sqrt{p^{2} + 1}}\, \tfrac{\partial f}{\partial p} \right) 
    - p \tfrac{\partial f}{\partial y} - \sqrt{p^{2} + 1} \, 
    \tfrac{\partial f}{\partial t},\qquad (p,y,t) \in \R^3,
\end{equation}
and takes the H\"ormander's form $\L = X^2 -Y$ if we set 
\begin{equation} \label{L1dh}
  X = {\sqrt{p^{2} + 1}}\, \tfrac{\partial}{\partial p} \qquad \text{and} \qquad 
  Y = p \tfrac{\partial f}{\partial y} + \sqrt{p^{2} + 1} \, \tfrac{\partial f}{\partial t}.
\end{equation}

\subsection{Physical interpretation}
Operator $\L$ is the relativistic version of the kinetic Fokker-Planck equation 
introduced by Kolmogorov \cite{K1} in $1934$
\begin{equation} \label{kolmo-base}
	\K f (p,y,t) = \frac{\p^{2} f }{ \p p^{2}} (p,y,t) - p \frac{\p f}{\p y} (p,y,t) - \frac{\p f}{\p t} (p,y,t) = 0
		\qquad (p,y,t) \in \R^{3}.
\end{equation}	
From the physical point of view, Fokker-Planck equations provide a continuous description of the dynamics of the 
distribution of Brownian test particles immersed in a fluid in thermodynamical equilibrium. More precisely, the distribution function $f$ of a test particle evolves according to the linear Fokker-Planck defined in \eqref{kolmo-base}, provided that the test particle is much heavier than the molecules of the fluid and that there is no friction. In particular, equation \eqref{kolmo-base} is the backward Kolmogorov equation of the Langevin process
\begin{equation}\label{langevin}
\begin{cases}
    & P_{t} = p_{0} + \sqrt{2} W_{t}, \\
	& Y_{t} = y_{0} + \int \limits_{0}^{t}P(s) ds,
\end{cases}
\end{equation}
where $\left(W_{t}\right)_{t \ge 0}$ denotes a $1-$dimensional Wiener process. We refer the reader to {\cite{APsurvey, Villani}, and the reference therein, for an exhaustive treatment of Fokker-Planck equations, and their applications.}

\medskip
In this article, we address a possible improvement of the model described in \eqref{kolmo-base} which is in accordance with special relativity.
Indeed, a questionable feature of \eqref{kolmo-base} is that its diffusion term $\frac{\p^2 f}{\p p^2}(p,y,t)$ operates with infinite velocity, as in classical mechanics the velocity is proportional to the momentum. In particular, it is known that, if we consider a continuous, non-negative and compactly supported initial distribution $f(p,y,0)$, then the unique non-negative solution $f(p,y,t)$ to the Cauchy problem relevant to \eqref{kolmo-base} is strictly positive for every positive $t$ (see, for instance, \cite[Theorem 3.3]{APsurvey}). In this scenario, there would be therefore a non-zero probability to find particles everywhere in space. This feature is clearly incompatible with the physical law that prevents particles from moving faster than light. To overcome this issue, we rely on the \emph{relativistic velocity}
\begin{equation}\label{rel-velocity}
v =\frac{p}{\sqrt{p^2+1}},
\end{equation} 
which clearly satisfies
\begin{equation*}
	\left\vert \frac{p}{\sqrt{p^{2} + 1}}\right\vert < 1 \quad \text{for every} \ p \in \R,
\end{equation*}
in accordance with the relativity principles\footnote{Here, we adopt a natural unit system with $c=1$, where $c$ is the speed of light.}. We consider the \emph{finite velocity Langevin process} analogous to \eqref{langevin}
\begin{equation}\label{SDE}
\begin{cases}
    & P_{t} = p_{0} + \sqrt{2} \int \limits_0^t \sqrt{P_s^2+1} dW_{s} \\
	& Y_{t} = y_{0} + \int \limits_{0}^{t} \frac{P_{s}}{\sqrt{P_{s}^{2} + 1} } ds,
\end{cases}
\end{equation}
and we recall that, by applying the relativistic It\^{o} calculus, Dunkel and H\"anggi find in \cite{DH2, DH} the Kolmogorov equation
 \begin{equation}\label{matrdiff}
\begin{split}
 \tilde{\L} f (p,y,t)=\frac{\p}{\p p}\left(
    \sqrt{p^{2} + 1}\, \frac{\p f}{\p p} (p,y,t)\right)- \frac{p}{\sqrt{p^2+1}} \frac{\p f}{\p y} (p,y,t) - \frac{\p f}{\p t} (p,y,t) = 0
   \end{split}
\end{equation}
satisfied by the density of the stochastic process $\left( P_t, Y_t\right)_{t \ge 0}$ in \eqref{SDE}. We refer the reader to \cite{Debbasch, DH2} for an overview to the relativistic theory of Brownian motions and corresponding relativistic kinetic equations. 

Alc\`antara and Calogero find the same equation \eqref{matrdiff} in \cite{AC} following a different approach, i.e. by merely requiring that some relevant properties of the non-relativistic equation are preserved in the relativistic setting. More precisely, as the non-relativistic operator is known to be Galilean invariant, the first requirement is the invariance with respect to the equivalent relativistic transformations, namely the Lorentz transformations. In addition, the authors of \cite{AC} impose that the relativistic Maxwellian distribution (or J\"{u}ttner distribution) $e^{-\gamma\sqrt{p^2+1}}, \gamma >0$, needs to be a stationary solution of equation \eqref{oprel} with friction, mirroring the fact that the Maxwellian distribution is a stationary solution of \eqref{kolmo-base} with friction. 
\medskip\\
We emphasize that $f$ is a solution to $\tilde{\L} f = 0$ if, and only if, it is a solution to $\L f = 0$ with $\L$ defined in \eqref{L1d}. We prefer to focus our attention on the differential operator $\L$ because it is invariant with respect to Lorentz transformations, as we will see in the following Subsection \ref{sec-inv}. We finally observe that \eqref{L1d} is the relativistic deterministic equation describing the density of the following stochastic process
\begin{equation}\label{RSDE}
\begin{cases}
    & P_{s} = p_{0} + \sqrt{2} \int \limits_0^s \sqrt{P_\tau^2+1}\, dW_{\tau}, \\
	& Y_{s} = y_{0} + \int \limits_{0}^{s} P_{\tau} d\tau,\\
	& T_{s} = t_{0} + \int \limits_{0}^{s} \sqrt{P_\tau^2+1}\, d\tau,
\end{cases}
\end{equation}
where the third component is the time, which is not an absolute quantity in the relativistic setting. 

\medskip

\subsection{Invariance properties}\label{sec-inv}
As it will be widely used in the sequel, we now focus on the invariance properties of operators $\L$ and $\K$.
As stated above, it is well known that $\K$ is invariant with respect to the Galilean change of variables
\begin{equation} \label{law-k} 
     (p_{0}, y_{0}, t_{0}) \circ_{\mathcal{G}} (p,y,t) = ( p_{0} + p, y_{0} + y + tp_{0} , t_{0} + t)
     \qquad \text{for every } \, \, (p_{0}, y_{0}, t_{0}), (p,y,t) \in \R^{3}.
\end{equation}
Indeed, if $ g(p, y, t) = f( p_{0} + p, y_{0} + y + tp_{0} , t_{0} + t)$ and $g^*(p, y, t) = f^*( p_{0} + p, y_{0} + y + tp_{0} , t_{0} + t)$, then
\begin{equation}\label{inv-K}
\K f =f^* \quad \iff \quad \K g=g^*, \quad \text{for every $(p_0,y_0,t_0) \in \R^3$}.
\end{equation}
In a natural way, operator $\L$ satisfies the relativistic analogue of property \eqref{inv-K}, i.e. it is invariant under Lorentz transformations. To show that, let us first summarize basic definitions and a few properties of those transformations. We recall that the relativistic momentum $p(t)$ and energy $E(t)$ of a particle with position $y(t)$ and velocity $v(t)=dy(t)/dt$ are given by
\begin{equation*}
p(t)=v\gamma(v(t))=\frac{v(t)}{\sqrt{1-v(t)^2}}, \qquad E(t)=\sqrt{p(t)^2+1}=\frac{1}{\sqrt{1-v(t)^2}}=\gamma(v(t)),\footnote{We here assume that the rest mass of the test particle is one.}
\end{equation*}
respectively, with $\gamma$ denoting the Lorentz factor 
\begin{equation*}
\gamma(v)=\frac{1}{\sqrt{1-v^2}}.
\end{equation*}
We combine time with position, and energy with momentum, to obtain the contravariant four-vectors\footnote{We use the term "four-vector" independently of the actual number of spatial dimensions.}
\[
\begin{pmatrix}
t\\
y
\end{pmatrix}, \qquad \text{and} \qquad \begin{pmatrix}
E\\
p
\end{pmatrix}.
\]
The above definitions refer to the intertial lab frame $\Sigma$, defined as the rest frame of the fluid. We now consider a second intertial frame $\tilde{\Sigma}$, moving with constant velocity $\beta$ with respect to $\Sigma$. According to Einstein's theory of special relativity, values of physical quantities in $\tilde{\Sigma}$ can be related to those in $\Sigma$ by means of the Lorentz transformations. In the one-dimensional case, the Lorentz transformation matrix reads as follows
\begin{equation*}
\Lambda(\beta)=\gamma(\beta)
\begin{pmatrix}
&1 &-\beta \\
&-\beta  &1
\end{pmatrix},
\end{equation*}
and its inverse is $\Lambda(-\beta)$. The matrices are representations of the Lorentz group acting on the four-vectors. The transformation law of arbitrary four-vectors is computed as follows
\begin{equation}\label{lorentz}
\begin{split}
\begin{pmatrix}
\tilde{t}\\
\tilde{y}
\end{pmatrix}
&=\gamma(\beta)\begin{pmatrix}
t-\beta y\\
y-\beta t
\end{pmatrix}, \qquad \begin{pmatrix}
\tilde{E}\\
\tilde{p}
\end{pmatrix}
=\gamma(\beta)\begin{pmatrix}
E-\beta p\\
p-\beta E
\end{pmatrix}\\
\begin{pmatrix}
t\\
y
\end{pmatrix}
&=\gamma(\beta)\begin{pmatrix}
\tilde{t}+\beta\tilde{y}\\
\tilde{y}+	\beta\tilde{t}
\end{pmatrix}, \qquad \begin{pmatrix}
E\\
p
\end{pmatrix}
=\gamma(\beta)\begin{pmatrix}
\tilde{E}+\beta\tilde{p}\\
\tilde{p}+\beta\tilde{E}
\end{pmatrix}.
\end{split}
\end{equation}
Let us consider the function, $\tilde{f}(\tilde{p},\tilde{y},\tilde{t})=f(p(\tilde{p}),y(\tilde{t},\tilde{y}),t(\tilde{t},\tilde{y}))$ that represents the one-particle phase space probability density function measured in the moving frame $\tilde{\Sigma}$, which, according to \cite{V}, transforms as a Lorentz scalar. If we set $g(p,y,t):=f(\tilde{p},\tilde{y},\tilde{t})$ and $g^*(p,y,t)=f^*(\tilde{p},\tilde{y},\tilde{t})$, then, applying the chain rule, we obtain 
\begin{align*}
\frac{\p g}{\p p}(p,y,t)&=\gamma\left(1-\frac{\beta p}{E} \right)\frac{\p f}{\p \tilde{p}}(\tilde{p},\tilde{y},\tilde{t})\\
\frac{\p g}{\p y}(p,y,t)&=\gamma\left(\frac{\p f}{\p \tilde{y}}(\tilde{p},\tilde{y},\tilde{t})-\beta \frac{\p f}{\p \tilde{t}}(\tilde{p},\tilde{y},\tilde{t})\right)\\
\frac{\p g}{\p t}(p,y,t)&=\gamma \left(-\beta\frac{\p f}{\p \tilde{y}}(\tilde{p},\tilde{y},\tilde{t})+ \frac{\p f}{\p \tilde{t}}(\tilde{p},\tilde{y},\tilde{t})\right).
\end{align*}
Then, the vector field $X$ defined in \eqref{L1d} is invariant with respect to Lorentz transformations, since
\begin{equation}\label{comp-diff}
\begin{split}
X(f(\tilde{p},\tilde{y},\tilde{t}))&=E \frac{\p g}{\p p}(p,y,t)= \gamma \left( E-\beta p\right)\frac{\p f }{\p p}(\tilde{p},\tilde{y},\tilde{t})\\
&=\tilde{E}\frac{\p f }{\p \tilde{p}}(\tilde{p},\tilde{y},\tilde{t})=(X f)(\tilde{p},\tilde{y},\tilde{t}).
\end{split}
\end{equation}
From \eqref{comp-diff}, it immediately follows that the diffusion term $X^2 f$ in \eqref{L1d} is also invariant with respect to Lorentz transformations. As far as we are concerned with the drift term $Y$ in \eqref{L1d}, we obtain
\begin{equation}\label{comp-drift}
\begin{split}
Y(f(\tilde{p},\tilde{y},\tilde{t}))&=p\frac{\p g}{\p y}(p,y,t)+E\frac{\p g}{\p t}(p,y,t)\\&=\gamma\left( p-\beta E\right)\frac{\p f}{\p \tilde{y}}(\tilde{p},\tilde{y},\tilde{t})+\gamma(-\beta p+E)\frac{\p f}{\p \tilde{t}}(\tilde{p},\tilde{y},\tilde{t})\\
&=\tilde{p}\frac{\p f}{\p \tilde{y}}(\tilde{p},\tilde{y},\tilde{t})+\tilde{E}\frac{\p f}{\p \tilde{t}}(\tilde{p},\tilde{y},\tilde{t})=(Yf)(\tilde{p},\tilde{y},\tilde{t}).
\end{split}
\end{equation}
In virtue of \eqref{comp-diff} and \eqref{comp-drift}, operator $\L$ is invariant with respect to the Lorentz transformations \eqref{lorentz}, i.e.
\begin{equation}\label{inv-L}
\L f =f^* \quad \iff \quad \L g=g^*, \quad \text{for every $(\tilde{p},\tilde{y},\tilde{t}) \in \R^3$}.
\end{equation}

Hence, owing to \eqref{inv-L}, operator $\L$ is invariant with respect to the following group operation on $\R^3$
\begin{equation}\label{group-law}
(p_0,y_0,t_0) \circ_{\mathcal{L}} (p,y,t)=\left(p\sqrt{p_0^2+1}+p_0 \sqrt{1+p^2},y_0+y\sqrt{p_0^2+1}+p_0t,t_0+t\sqrt{p_0^2+1}+p_0 y  \right).
\end{equation}
We remark that for small velocities $\sqrt{1+p_0^2} \approx 1$, and therefore \eqref{group-law} becomes precisely the non-relativistic composition law \eqref{law-k} for variables $p$ and $y$.

Moreover, $\mathbb{G}:=(\R^3, \circ_{\mathcal{L}})$ is a Lie group with identity $e$ and inverse $(p,y,t)^{-1}$ defined as:
\begin{equation*}
e=(0,0,0), \qquad (p,y,t)^{-1}=\left( -p,pt-\frac{ y}{\sqrt{p^2+1}}- \frac{p^2 y}{\sqrt{p^2+1}},-t\sqrt{p^2+1}+p y\right).
\end{equation*}
Then, in particular, we have that
\begin{align*}
&(p_0,y_0,t_0)^{-1} \circ_{\mathcal{L}} (p,y,t)\\
&\quad=\left(p\sqrt{p_0^2+1}-p_0 \sqrt{p^2+1},\sqrt{p_0^2+1}(y-y_0)-p_0(t-t_0),\sqrt{p_0^2+1}(t-t_0)-p_0(y-y_0) \right),
\end{align*}
so that \eqref{inv-L} is equivalent to 
\begin{equation}\label{inv-g}
f(p,y,t)=g((p_0,y_0,t_0)^{-1} \circ_{\mathcal{L}} (p,y,t)).
\end{equation}
\medskip

We conclude this Section with a remark concerning operator $\tilde{\L}$ in \eqref{matrdiff}. As already noticed, $\tilde{\L} f = 0$ if, and only if, $\L f = 0$. Moreover, $\tilde{\L}$ looks simpler than $\L$, as the derivative with respect to the time variable $\frac{\partial f}{\partial t}$ appearing  in $\L$ is multiplied by the coefficient $\sqrt{p^2+1}$, unlike $\tilde{\L}$. However, operator $\tilde{\L}$ is not invariant with respect to Lorentz transformations.

Indeed, Biagi and Bonfiglioli prove in \cite{BB} a general result for operators in the form \eqref{hormander-op},
where $X_1, \dots, X_{m+1}$ are H\"ormander's vector fields, with the property that, for every $z \in \R^N$, the integral curves
\footnote{The integral curve $\gamma: I \rightarrow \R^N$ of a vector field $X$ on $\R^N$ is defined by $\gamma'(s)=X(\gamma(s))$ for every $s \in I$.}
$\exp( t X_1)z$, \dots, $\exp( t X_{m+1})z$ are defined for every $t \in \R$. They prove that operator ${\L}$ is invariant with respect to the left translation of some Lie group $\mathbb{G}= \left( \R^{N}, \circ \right)$ if, and only if, the Lie algebra generated by $X_1, \dots, X_{m+1}$, as a linear subspace of the smooth vector fields in $\R^N$, has dimension $N$.

If we apply this condition to operator $\L = X^2 - Y$, with $X$ and $Y$ defined in \eqref{L1dh}, we find
\begin{equation*}
 [X,Y] =  \sqrt{p^2+1} \, \partial_y + p \,  \partial_t, \quad [X, [X,Y]] = Y
 \quad \text{and} \quad [Y, [X,Y]] = 0,
\end{equation*}
so that the dimension of the Lie algebra generated by $X$ and $Y$ equals 3, which is the dimension of the space $\R^3$. On the other hand, $\widetilde \L$ can be written in the form $\widetilde \L = {\widetilde X}^2 - \widetilde Y$, with
\begin{equation*}
 \widetilde X = \sqrt[4]{p^2+1} \, \partial_p \quad \text{and} \quad
 \widetilde Y =  \tfrac{p}{\sqrt{p^2+1}} \, \partial_p +\tfrac{p}{\sqrt{p^2+1}} \, \partial_y +  \, \partial_t,
\end{equation*}
and a direct computation shows that the dimension of the Lie algebra generated by $\widetilde X$ and $\widetilde Y$ is infinite. For this reason, we believe that $\L$ is the suitable relativistic generalization of \eqref{kolmo-base}.

\subsection{Main results} \label{secMainRes}
Our main result is a lower bound for the density of the stochastic process \eqref{RSDE}. This goal will be achieved via an approach in the framework of PDEs theory and therefore we state it in terms of \emph{fundamental solution} $\Gamma$. To this end, we recall the definition of $\Gamma$ below.
\begin{definition} 
\label{fun-sol-def}
We say that a function $\Gamma : \mathbb{R}^3 \times \mathbb{R}^3 \rightarrow \R $ is a {\rm fundamental solution} of operator $\L$ in \eqref{L1d} if it satisfies the following conditions:
\begin{itemize}
\item[1.] for every $(p_0,y_0,t_0)\in \R^3$, the function $(p,y,t)\mapsto \Gamma(p,y,t;p_0,y_0,t_0)$ belongs to $L^1_{\loc}(\R)\cap C^\infty(\R^3 \setminus \lbrace (p_0,y_0,t_0)\rbrace)$ and it is a classical solution to $\L u =0$ in $\R^3 \setminus \lbrace (p_0,y_0,t_0)\rbrace$;
\item[2.] for every $\phi \in C_b(\R^2)$, the function
\begin{equation*}
u(p,y,t)=\int_{\R^3} \Gamma(p,y,t;\xi,\eta,t_0)\phi(\xi,\eta)d\xi \, d\eta
\end{equation*}
is a classical solution to the Cauchy problem
\begin{equation*}
  \begin{cases}
  \L u(p,y,t)= 0, \qquad &\text{in  } \R^{2} \times ]t_0, + \infty[ \\
  f(p,y,t_0) = \phi \left(p, y\right), \qquad &\text{in  } \R^{2}.
  \end{cases}
\end{equation*}
\end{itemize}
\end{definition}

\medskip

In the statement of the following theorem, which is the  main result of this article, the function $\Psi$ is the \emph{value function} of a suitable optimal control problem and is defined in equation \eqref{defPSI} below.

\begin{theorem} \label{boundsL}
Let $\Gamma$ be the fundamental solution of $\L$ in \eqref{L1d}. Then for every $T > 0$ there exist three positive constants $\theta, c_T, C$ with $\theta < 1$, such that 
\begin{equation*}
	\Gamma (p_0,y_0,t_0;p_1,y_1,t_1) \ge \frac{c_T}{(t_0 - t_1)^{2}} \exp \left\{ - C \, \Psi \left(p_0, y_0, y_0; p_1, y_1, \theta^2t_1 + (1 - \theta^2)t_0 \right)  \right\}
\end{equation*}
for every $(p_0,y_0,t_0), (p_1,y_1,t_1) \in \R^3$ such that $0 < t_0 - t_1 < T$. The constants $\theta$ and $C$ only depend on $\L$, while $c_T$ also depends on $T$.
\end{theorem}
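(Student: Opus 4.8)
\medskip

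The plan is to reduce the lower bound for the fundamental solution $\Gamma$ of $\L$ to a corresponding lower bound for the fundamental solution of the classical Kolmogorov operator $\K$ in \eqref{kolmo-base}, exploiting both the Lorentz invariance \eqref{inv-L}--\eqref{group-law} of $\L$ and a Harnack chain argument. The first step is to establish a \emph{Lorentz-invariant Harnack inequality} for non-negative solutions of $\L f = 0$: there is a compact set $K \subset \R^3$ (a ``past'' cylinder relative to the origin for $\circ_\mathcal{L}$) and a constant $M>1$ such that $\sup_K f \le M f(0,0,0)$ for every non-negative solution $f$ on a neighbourhood of $K$. Since $\L$ is an Hörmander operator satisfying the rank condition \eqref{hormander}, such a local Harnack inequality follows from the general Bony-type theory for hypoelliptic operators; the Lorentz invariance then lets us translate it, via the group law $\circ_\mathcal{L}$, to any pair of points with the right time-ordering.

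\medskip

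The second step is to pass from the one-step Harnack inequality to an estimate along a controlled path. Given $(p_0,y_0,t_0)$ and $(p_1,y_1,t_1)$ with $0 < t_0 - t_1 < T$, I would choose an admissible trajectory of the control system associated with the vector fields $X$ and $Y$ in \eqref{L1dh} joining the two points, iterate the Harnack inequality along a chain of points lying on this trajectory, and thereby obtain
\[
f(p_0,y_0,t_0) \ge M^{-N} f(p_1,y_1,t_1)
\]
where $N$ is the number of Harnack steps. The optimal number of steps, after optimizing over admissible trajectories, is governed precisely by the value function $\Psi$ of the optimal control problem in \eqref{defPSI}; this is the standard mechanism by which the exponential factor $\exp\{-C\Psi\}$ arises. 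The shift from $t_1$ to $\theta^2 t_1 + (1-\theta^2) t_0$ in the argument of $\Psi$, and the presence of the parameter $\theta < 1$, come from the fact that each Harnack step costs a definite ``waiting time'', so the chain cannot reach the terminal time exactly but only a fixed fraction of the way; this is the relativistic analogue of the classical phenomenon in \cite{LP, PaPo}.

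\medskip

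The third step is to produce the prefactor $c_T/(t_0-t_1)^2$ and to remove the smallness assumption on the spatial increments that a naive Harnack chain would require. Here I would apply the above estimate not to $\Gamma$ directly but to the solution $u(p,y,t) = \int_{\R^3}\Gamma(p,y,t;\xi,\eta,t_1)\varphi(\xi,\eta)\,d\xi\,d\eta$ of the Cauchy problem with a suitable approximate-identity datum $\varphi$ concentrated near $(p_1,y_1)$, using the reproduction property and an on-diagonal lower bound $\Gamma(p,y,t;p,y,s) \gtrsim (t-s)^{-2}$ obtained from a parametrix/scaling argument or, alternatively, from the explicit structure of $\Gamma$ for small times; letting $\varphi$ converge to a Dirac mass then yields the claim. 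The main obstacle, and the part requiring the most care, is the second step: making the control-theoretic optimization rigorous in the Lorentzian geometry — i.e. verifying that $\Psi$ is exactly the right rate, that admissible trajectories with the claimed cost genuinely exist between arbitrary endpoints, and that the discretization of such a trajectory into Harnack cylinders respects the invariance \eqref{inv-g} — since the group $\circ_\mathcal{L}$ is non-nilpotent and lacks the dilation structure available in the classical Kolmogorov case, so the usual homogeneity shortcuts are unavailable and the geometry of the reachable sets must be analyzed by hand.
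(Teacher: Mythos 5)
Your overall architecture --- an invariant Harnack inequality, Harnack chains along $\L$-admissible paths whose optimal cost is the value function $\Psi$, an on-diagonal lower bound producing the factor $(t_0-t_1)^{-2}$, and Lorentz invariance to reduce to a pole at the origin --- is the same as the paper's. However, the two ingredients you treat as available off the shelf are precisely where the technical content lies, and as stated they do not hold up. First, the Harnack inequality needed for the chain is not the fixed-compact-set Bony-type inequality translated by $\circ_{\mathcal{L}}$: Proposition \ref{prop-Phi} requires Theorem \ref{harnack-relativistic-vero} at every scale $r \in \left]0,\tfrac12\right]$ with a constant independent of $r$, because the chain's cylinders have radii $r_j = \sqrt{t_j - t_{j+1}}/\theta$ dictated by the control cost and must fit inside the strip $\R^2 \times ]t_1, t_0[$, which is thin when $t_0 - t_1$ is small (this uniformity is exactly what lets the prefactor survive as $t_0 - t_1 \to 0$). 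Since, as you yourself observe, $(\R^3, \circ_{\mathcal{L}})$ admits no dilations, a single-scale Harnack inequality plus left translations cannot be rescaled; the paper obtains the uniform small-scale statement through the change of variable $x = p/\sqrt{1+p^2}$ of Lemma \ref{equivalence}, which turns $\L$ into the kinetic operator \eqref{kolmorel} whose coefficients \eqref{def-ab} satisfy \textbf{(H)} on the relevant cylinders, and then invokes the scale-invariant Harnack inequality of Theorem \ref{harnack-kolmo-GIMV}.

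Second, the on-diagonal bound $\Gamma(p,y,t;p,y,s) \gtrsim (t-s)^{-2}$ cannot be read off ``from the explicit structure of $\Gamma$'' (no explicit formula exists), nor from scaling (again, no dilations), and a parametrix construction for $\L$ is nontrivial precisely because after the change of variables the diffusion coefficient $(1-x^2)^{5/2}$ degenerates as $|x| \to 1$; none of these substitutes is justified in your proposal. The paper proves this bound (Lemma \ref{lemma-gamma}) by truncating the coefficients as in \eqref{def-ab-modif} so that \textbf{(H)} holds globally, taking the Green function $G$ of the resulting operator $\widetilde{\K}$ on the bounded cylinder $\mathcal{H}$, showing $\Gamma \ge G$ there via the comparison principle, and quoting the Green-function lower bound of Lemma \ref{lower-green}. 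Without concrete replacements for these two steps your chain argument has the right shape but no quantitative input: neither the exponent $C\,\Psi$ nor the prefactor $c_T(t_0-t_1)^{-2}$ actually emerges. A smaller remark: the approximate-identity datum in your third step is unnecessary, since the chain estimate can be applied directly to $f = \Gamma(\,\cdot\,;p_1,y_1,t_1)$, which is a nonnegative solution away from the pole; this is what the paper does.
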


As far as the analogous upper bound is concerned, we believe it can be achieved by means of control theory in the same spirit of \cite{CPR, CintiPolidoro}. As this problem needs to be studied in a different framework, this issue will be addressed in a forthcoming paper. 
\medskip

This work is organized as follows. Section \ref{harnack} is devoted to the proof of an invariant Harnack inequality for solutions to $\L f = 0$. Section \ref{bounds} collects useful results on the optimal control problem associated to $\Psi$, while in Section \ref{setting} we give proof of our main result. Finally, in Appendix \ref{sec-Horm} we show how to place the higher dimensional operator \eqref{oprel} in H\"ormander's theory.

\setcounter{equation}{0}\setcounter{theorem}{0}
\section{Harnack inequality} \label{harnack}
\medskip
This section is devoted to the proof of a scale-invariant Harnack inequality for solutions to \eqref{oprel}, which is invariant with respect to Lorentz transformations. We introduce some notation necessary to state this result. For every positive $r$ we introduce the cylinders
\begin{equation} \label{eq-cyl-origin}
\begin{split}
	H_{r}(0) &:= \left\{ (p,y,t) \in \R^3 \mid \left| p\right| < r, 	\left|y \right| < r^3, -r^2 < t < 0 \right\},\\
	S_{r}(0) &:= \left\{ (p,y,t) \in \R^3 \mid \left| p\right| < r, 	\left|y \right| < r^3, -r^2 \leq t\leq-r^2/2 \right\}.
\end{split}
\end{equation}
Owing to \eqref{group-law}, for every $z_0=(p_0,y_0,t_0) \in \R^3$, we set
\begin{align}\label{eq-cyl-B-2}
H_r^{\mathcal{L}}(z_0):=z_0 \circ_{\mathcal{L}} H_r(0), \qquad S_r^{\mathcal{L}}(z_0):=z_0 \circ_{\mathcal{L}} S_r(0).
\end{align}
We are now in a position to state the following result.
\begin{theorem} \label{harnack-relativistic-vero}
There exist two constants $C_H>0$ and $\theta \in ]0,1[$, only depending on $\L$, such that
\begin{equation*}
	\sup \limits_{S^{\mathcal{L}}_{\theta r}(z_0)} f \, \le \, C_{H} f(z_0),
\end{equation*}
for every $z_0 \in \R^3, r \in ]0,1/2]$, and for every non negative solution $f$ to $\L f = 0$ in $H^{\mathcal{L}}_{r}(z_0)$. 
\end{theorem}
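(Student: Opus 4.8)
The plan is to reduce the Lorentz‑invariant Harnack inequality for $\L$ to the classical Harnack inequality for Hörmander operators of Kolmogorov type, by exploiting the Lie group structure $\mathbb{G}=(\R^3,\circ_{\mathcal{L}})$ established in Subsection \ref{sec-inv}. First I would observe that, thanks to the invariance property \eqref{inv-L}–\eqref{inv-g}, it suffices to prove the statement when $z_0=e=(0,0,0)$: given a nonnegative solution $f$ to $\L f=0$ in $H^{\mathcal{L}}_r(z_0)=z_0\circ_{\mathcal{L}}H_r(0)$, the function $g(z):=f(z_0\circ_{\mathcal{L}}z)$ solves $\L g=0$ in $H_r(0)$, and $\sup_{S^{\mathcal{L}}_{\theta r}(z_0)}f=\sup_{S_{\theta r}(0)}g$ while $f(z_0)=g(0)$; so the general case follows from the case $z_0=e$ applied to $g$. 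This step is routine once one has checked that $\circ_{\mathcal{L}}$ acts by smooth diffeomorphisms and that the cylinders $H_r(0),S_r(0)$ behave correctly under the left translations.

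The second ingredient is a rescaling. The classical Kolmogorov operator $\K$ in \eqref{kolmo-base} admits dilations $\delta_\lambda(p,y,t)=(\lambda p,\lambda^3 y,\lambda^2 t)$, which is precisely the scaling built into the definition \eqref{eq-cyl-origin} of $H_r(0)$ and $S_r(0)$. Here $\L$ is \emph{not} homogeneous under these dilations because of the factors $\sqrt{p^2+1}$, but on the small cylinder $H_r(0)$ with $r\le 1/2$ the coefficient $\sqrt{p^2+1}$ is bounded between $1$ and $\sqrt{5}/2$, so $\L$ is a small, smooth, uniformly elliptic‑in‑the‑$X$‑direction perturbation of $\K$ with the same Hörmander structure ($X=\sqrt{p^2+1}\,\p_p$, $Y=p\,\p_y+\sqrt{p^2+1}\,\p_t$, and $[X,Y]$, $[X,[X,Y]]=Y$ spanning $\R^3$ as computed in the text). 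One then invokes the known invariant Harnack inequality for nonnegative solutions of operators $\sum a_{ij}X_iX_j + \text{lower order} - Y$ of Kolmogorov–Hörmander type with smooth coefficients, in the form proved e.g. in the references cited in \cite{APsurvey} (Kogoj–Lanconelli, Cinti–Nyström–Polidoro, Polidoro): on a fixed reference cylinder such an inequality holds with a constant depending only on the ellipticity bounds and the modulus of continuity of the coefficients. Applying this on $H_1(0)\supset H_r(0)$ after rescaling — or directly on $H_r(0)$ using that the relevant constants are uniform for $r\le 1/2$ because the coefficients of $\L$ and all their derivatives are uniformly controlled there — yields $\sup_{S_{\theta r}(0)}f\le C_H f(0)$ with $\theta\in(0,1)$ and $C_H$ depending only on $\L$.

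More concretely, the cleanest route avoids quoting a perturbation result as a black box: I would first prove a \emph{non-invariant} local Harnack inequality on the fixed cylinders $H_1(0)$ and $S_1(0)$ by a Moser iteration / Harnack‑chain argument adapted to the Hörmander vector fields $X$ and $Y$ — using hypoellipticity (so solutions are smooth), the $C^\infty$ coefficients of $\L$, and the rank‑$3$ bracket condition to build a controllable chain of Harnack balls connecting $S_1(0)$ to the vertex region near $0$. Then, for $r\le 1/2$, I would transfer this to $H_r(0)$: one cannot use an exact dilation symmetry, but the map $\delta_r$ conjugates $\L$ restricted to $H_r(0)$ into an operator $\L_r$ on $H_1(0)$ of the same structural type whose coefficients, together with their derivatives, converge uniformly as $r\to 0$ to those of $\K$ and in particular stay in a fixed compact family for $r\in(0,1/2]$; hence the Harnack constant can be taken independent of $r$. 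Combining this uniform local inequality with the group‑translation reduction of the first paragraph gives the statement for all $z_0\in\R^3$.

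The main obstacle I expect is exactly this lack of an exact dilation (and the non‑homogeneity of $\circ_{\mathcal{L}}$ away from $p_0$ small): one has invariance under left translations of $\mathbb{G}$ but \emph{not} under a compatible one‑parameter dilation group, so the usual ``translate to the origin, then scale'' dilation‑invariance proof of Harnack must be replaced by the uniform‑in‑$r$ compactness/perturbation argument sketched above. Checking that the rescaled coefficients really do lie in a fixed compact class (uniform ellipticity of $X^2$, uniform $C^k$ bounds, and uniform validity of the bracket condition quantitatively) for all $r\in(0,1/2]$, and that the Harnack‑chain geometry is stable under this rescaling, is the technical heart of the proof; everything else is bookkeeping with the group law \eqref{group-law} and its inverse.
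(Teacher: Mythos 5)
Your reduction to $z_0=(0,0,0)$ via the group law \eqref{group-law} is the same as in the paper, but the core of your argument has a genuine gap. The invariant Harnack inequalities you propose to quote (those of \cite{PODF}, \cite{GIMV} and the results surveyed in \cite{APsurvey}) are proved for operators whose first-order part is \emph{exactly} the kinetic transport $x\,\p_y+\p_t$ (more generally $\langle z,B\nabla\rangle+\p_t$ with a constant matrix $B$), possibly with lower-order terms only in the diffusive directions. The operator $\L=X^2-Y$ with $Y=p\,\p_y+\sqrt{p^2+1}\,\p_t$ from \eqref{L1dh} is not in this class: after normalizing the $\p_t$-coefficient the transport term becomes $\tfrac{p}{\sqrt{1+p^2}}\,\p_y+\p_t$, whose $\p_y$-coefficient differs from $p$ by $O(p^3)$. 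This discrepancy cannot be treated as a ``small smooth perturbation with the same H\"ormander structure'': there is no diffusion in the $y$ and $t$ directions, so a perturbation of the drift along $\p_y$ is not absorbable, and the constants in the quoted theorems are not of the form ``ellipticity bounds plus modulus of continuity'' for such drifts — the drift fixes the underlying Galilean geometry, the cone of propagation of positivity and the Harnack-chain structure. Your fallback (prove a Bony/Moser-type Harnack inequality for the smooth hypoelliptic operator on $H_1(0)$ and obtain $r$-uniformity by conjugating with the dilation $\delta_r$ and invoking compactness of the rescaled coefficients) does not close the gap either: Harnack constants for degenerate operators do not follow from compactness of the coefficient family alone, one needs a quantitative, coefficient-uniform theory — which is exactly what the rough-coefficient results supply, but again only for the exact kinetic drift; moreover the rescaled operators still carry the transport coefficient $\tilde p/\sqrt{1+r^2\tilde p^2}\neq\tilde p$, so the same obstruction reappears at every scale.

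The missing idea, and the paper's key step, is a change of variables rather than a perturbation argument: setting $x=\varphi(p)=p/\sqrt{1+p^2}$ as in \eqref{eq-varphi} and $u(x,y,t)=f(\psi(x),y,t)$, Lemma \ref{equivalence} shows that $u$ solves an equation of the form \eqref{kolmo} with the \emph{exact} transport term $x\,\p_y+\p_t$ and coefficients $a=(1-x^2)^{5/2}$, $b=-2x(1-x^2)^{3/2}$ as in \eqref{def-ab}, which satisfy assumption \textbf{(H)} on the range of $x$ corresponding to $|p|\le 1/2$. Since $|\varphi(p)|\le|p|$, the sets $H_r(0)$ and $S_{\theta r}(0)$ are mapped into their Galilean counterparts, and the scale-invariant Harnack inequality of Theorem \ref{harnack-kolmo-GIMV} — valid for merely bounded continuous coefficients, with constants depending only on $\lambda^\pm$ and hence uniform in $r\le 1/2$ — applies directly; no rescaling, compactness argument or new Moser iteration is needed. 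The case of general $z_0$ then follows by Lorentz invariance exactly as in your first paragraph. With that substitution your outline becomes essentially the paper's proof; without it, the step in which you invoke (or re-prove, uniformly in $r$) a Harnack inequality for $\L$ itself is unjustified.
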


The proof of Theorem \ref{harnack-relativistic-vero} is obtained from the analogous Harnack inequality for the \emph{non-relativistic} kynetic operator $\widetilde \K$ acting as
\begin{equation} \label{kolmo}
	\widetilde \K f =a(p,y,t)\,\frac{\p^2 f}{\p p^2} + b(p,y,t) \frac{\p f}{\p p} - p  \frac{\p f}{\p y} -  \frac{\p f}{\p t} .
\end{equation}
In the following, $H_r^{\mathcal{G}}(z_0)=z_0 \circ_{\mathcal{G}} H_r(0)=\{ z \in \R^{3} : \, z = z_{0} \circ_{\mathcal{G}} \z, \, \z \in  H_r(0) \}$ and $S_r^\mathcal{G}(z_0)=z_0 \circ_{\mathcal{G}} S_r(0)=\{ z \in \R^{3} : \, z = z_{0} \circ_{\mathcal{G}} \z, \, \z \in  S_r(0) \}$, where $\circ_{\mathcal{G}}$ is the composition law defined in \eqref{law-k}. We here recall the statement of the Harnack inequality for classical solutions to \eqref{kolmo} proved in \cite{PODF}.
\begin{theorem} \label{harnack-kolmo-GIMV}
Suppose that the coefficients $a$ and $b$ in \eqref{kolmo} are continuous functions satisfying
\begin{itemize}
\item[\textbf{(H)}] There exist two constants $\l^-, \l^+> 0$ such that
\begin{equation*}
	\l^- \le a(p,y,t) \le \l^+,\\\qquad |b(p,y,t)| \le \l^+ 
	\qquad \text{for every} \ (p,y,t) \in \R^{3}.
\end{equation*}
\end{itemize}
Then there exist two constants $C_H>0$ and $\theta \in ]0,1[$, only depending on $\l^-$ and $\l^+$ such that
\begin{equation*}
	\sup \limits_{S^{\mathcal{G}}_{\theta r}(z_0)} f \, \le \, C_{H} f(z_0).
\end{equation*}
for every $z_0 \in \R^3, r \in \left]0, \frac12\right[$, and for every non negative solution $f$ to $\widetilde \K f = 0$ in $H^{\mathcal{G}}_{r}(z_0)$.
\end{theorem}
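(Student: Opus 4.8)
The plan is to follow the potential-theoretic route of \cite{PODF} for Kolmogorov-type operators in non-divergence form. First I would pass to a normalized configuration. The operator $\widetilde{\K}$ shares the invariance \eqref{inv-K} under $\circ_{\mathcal{G}}$ of the model operator \eqref{kolmo-base}, and the anisotropic dilations $\delta_\lambda(p,y,t)=(\lambda p,\lambda^3 y,\lambda^2 t)$ rescale a nonnegative solution of $\widetilde{\K}f=0$ on $H^{\mathcal{G}}_r(z_0)$ into a nonnegative solution $g(w)=f\big(z_0\circ_{\mathcal{G}}\delta_r(w)\big)$ of an equation of the same type on $H_1(0)$, whose coefficients $\widetilde a(w)=a(z_0\circ_{\mathcal{G}}\delta_r w)$ and $\widetilde b(w)=r\,b(z_0\circ_{\mathcal{G}}\delta_r w)$ still satisfy hypothesis $(\mathbf{H})$ with the \emph{same} $\l^-,\l^+$ (here $r<1$ is used only for the bound on $\widetilde b$). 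Since $\delta_r\big(S_\theta(0)\big)=S_{\theta r}(0)$, it suffices to prove $\sup_{S_\theta(0)}g\le C_H\,g(0)$ for nonnegative solutions on $H_1(0)$, with $C_H,\theta$ depending only on $\l^\pm$; moreover a routine approximation of $a,b$ by smooth coefficients with the same bounds lets me assume the coefficients are smooth, provided the final constants depend on them only through $\l^-,\l^+$.

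The central step is the construction and two-sided Gaussian estimate of the fundamental solution $\widetilde\Gamma$ of $\widetilde{\K}$. Freezing the leading coefficient at a point $\zeta$ produces the constant-coefficient operator $\K_\zeta=a(\zeta)\partial_p^2-p\,\partial_y-\partial_t$, a dilated copy of \eqref{kolmo-base} with explicit Gaussian fundamental solution $\Gamma_\zeta$; one then sets $\widetilde\Gamma(z;\zeta)=\Gamma_\zeta(z;\zeta)+\int\Gamma_w(z;w)\,\Phi(w;\zeta)\,dw$ and solves the Volterra equation for $\Phi$ by iteration, the (Hölder) continuity of $a$ controlling the parametrix defect $(\widetilde{\K}-\K_\zeta)\Gamma_\zeta$ and $(\mathbf{H})$ making the series converge with bounds depending only on $\l^\pm$. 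This yields the upper bound $\widetilde\Gamma(z;\zeta)\le c^+\,\Gamma^{\l^+}(z;\zeta)$, where $\Gamma^{\mu}$ denotes the Kolmogorov fundamental solution with diffusion coefficient $\mu$; the matching lower bound $\widetilde\Gamma(z;\zeta)\ge c^-\,\Gamma^{\l^-}(z;\zeta)$ is obtained separately, by propagating positivity of $\widetilde\Gamma(\cdot\,;\zeta)$ forward in time along the drift curves $s\mapsto(p,y+sp,t+s)$ in finitely many steps, each step being a Harnack estimate for the model operator on a small cylinder.

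From these bounds I would split the Harnack inequality into two halves. The Gaussian upper bound --- equivalently a Moser-type iteration on nonnegative subsolutions --- gives a local boundedness estimate $\sup_{S_\theta(0)}g\le C\,|Q|^{-1}\!\int_Q g$ for a fixed cylinder $Q$ lying in the strict past of the origin and of bounded eccentricity. Representing the nonnegative solution $g$ through the Green function of an interior subdomain gives $g(0)\ge c\int_{\{t=s\}}\widetilde\Gamma(0;p',y',s)\,g(p',y',s)\,dp'\,dy'$ for each $s<0$; integrating over $s$ in a suitable interval and using the Gaussian lower bound --- which stays bounded below on $Q$ precisely because $Q$ is at a fixed positive time-lag below the origin and inside the reachable region, so the Kolmogorov control cost (the non-relativistic analogue of the value function $\Psi$ of Theorem \ref{boundsL}) is bounded there --- yields $g(0)\ge c'\,|Q|^{-1}\!\int_Q g$. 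Chaining the two inequalities gives $\sup_{S_\theta(0)}g\le C\,|Q|^{-1}\!\int_Q g\le C''\,g(0)$, which is the assertion after relabelling $\theta$.

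The main obstacle I expect is the lower Gaussian bound for $\widetilde\Gamma$: contrary to the upper bound it does not come from the parametrix series but requires propagating positivity forward along the degenerate, time-irreversible drift, whose integral curves move only $y$ and $t$ while the single diffusion direction has to reach the $p$-variable --- this is exactly what forces the chain of overlapping cylinders and makes the time-gap between $S^{\mathcal{G}}_{\theta r}(z_0)$, at times in $[t_0-\theta^2r^2,\,t_0-\theta^2r^2/2]$, and the vertex $z_0$ unavoidable (no estimate of the stated form can hold if $S_{\theta r}$ is allowed to touch the level $t=t_0$). A secondary, bookkeeping-type difficulty is keeping every constant produced by the parametrix construction, the Gaussian bounds and the iterations dependent on $a,b$ only through $\l^-$ and $\l^+$, which is guaranteed by the uniformity in $(\mathbf{H})$. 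As an alternative to the fundamental-solution route, one may instead run the De Giorgi--Nash--Moser machinery adapted to kinetic equations --- energy estimates, a velocity-averaging gain of integrability, and De Giorgi's intermediate-value lemma --- which reaches the same conclusion and is the natural path for merely bounded measurable coefficients in divergence form.
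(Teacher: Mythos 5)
The paper itself offers no proof of this statement: Theorem \ref{harnack-kolmo-GIMV} is recalled verbatim from \cite{PODF} (with \cite{GIMV}, \cite{AEP}, \cite{AR-harnack} cited as alternative routes), so the only meaningful comparison is with the strategy of that reference. Your outline is essentially that strategy: reduction by the Galilean translations and the anisotropic dilations $\delta_\lambda(p,y,t)=(\lambda p,\lambda^3y,\lambda^2t)$ to the unit cylinder, a parametrix (Levi) construction of the fundamental solution by freezing the coefficient $a$, two-sided Gaussian-type bounds in terms of the model Kolmogorov kernels, and then a local boundedness estimate chained with a Green-function/mean-value lower bound at the vertex. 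Your remark that the time gap between $S^{\mathcal{G}}_{\theta r}(z_0)$ and $z_0$ is unavoidable is also correct and is exactly the geometric feature exploited in \cite{PODF}.

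Two points in your sketch are weaker than you present them. First, the parametrix step does \emph{not} produce constants depending only on $\lambda^-,\lambda^+$: the convergence of the Volterra series and the resulting Gaussian upper bound are controlled by the modulus (in \cite{PODF}, the H\"older/Dini norm) of continuity of $a$, so obtaining $C_H$ and $\theta$ depending only on the ellipticity constants requires the more careful argument of the reference, in which the Harnack constant is ultimately extracted from bounds that are uniform over the ellipticity class, rather than read off the parametrix series as you suggest. Second, the ``routine approximation by smooth coefficients'' is not routine for non-divergence form equations: a classical solution $f$ is tied to the original coefficients, and replacing $a,b$ by mollified versions produces a right-hand side $(\widetilde a-a)\partial_p^2 f+(\widetilde b-b)\partial_p f$ that you cannot control without a priori bounds on $\partial_p^2 f$, so a genuine stability or compactness argument would be needed at that point. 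Neither issue affects the way the theorem is used in this paper, since the coefficients \eqref{def-ab}--\eqref{def-ab-modif} are explicit, smooth, and have universally bounded H\"older norms, but as a standalone proof of the statement with constants depending only on $\lambda^\pm$ your outline leaves these gaps open.
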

Several proofs of Theorem \ref {harnack-kolmo-GIMV} are available in literature. We refer to \cite{APsurvey} and to its bibliography for a survey on the classical theory to operator \eqref{kolmo}. In recent years, researchers have focused on developing the weak regularity theory for solutions to equations of the type \eqref{kolmo}. Among the most recent results, we recall the Harnack inequality proven by Golse, Imbert, Mouhot and Vasseur in \cite{GIMV} concerning weak solutions to divergence form operators with measurable coefficients. We also refer the reader to \cite{AEP} for a geometric statement of the Harnack inequality, and to \cite{AR-harnack} for an invariant Harnack inequality for the more general Kolmogorov operator.

\subsection{Change of variable}
In order to prove Theorem \ref{harnack-relativistic-vero}, we perform an appropriate change of variable that allows us to write solutions to $\tilde{\L} f =0$ (and therefore to $\L f = 0$) in the form \eqref{kolmo}. To this end, we denote by $\varphi$ the function defined as follows
\begin{equation} \label{eq-varphi}
 \varphi(p) := \frac{p}{\sqrt{1 + p^2}}, \qquad p \in \R,
\end{equation}
where we remark that $\phi(p)$ is actually the relativistic velocity defined in \eqref{rel-velocity}. By a direct computation, we easily obtain
\begin{equation}\label{der-varphi}
 	\varphi'(p) := \frac{1}{(1 + p^2)^{3/2}}, \qquad 
 	\varphi''(p) := - \frac{3p}{(1 + p^2)^{5/2}}.
\end{equation}
Moeover, it is easy to verify the function
\begin{equation}\label{psi}
\psi(x) := \frac{x}{\sqrt{1 - x^{2}}}
\end{equation}
is the inverse of $\varphi$, and the following identity holds
\begin{equation} \label{eq-varphi-id}
 1-\varphi^2(p) = \frac{1}{1 + p^2}, \qquad p \in \R.
\end{equation}
We are now in a position to state and prove the following preliminary result.

\begin{lemma} \label{equivalence} 
Let $f$ be a solution to $ \L f = 0$. For every $(x,y,t) \in ]-1,1[ \times \R^2$ , we define the function
\begin{equation} \label{eq-def-u}
 u(x,y,t) := f\left( \psi\left( x\right),y, t \right),
\end{equation}
where $\psi$ was defined in \eqref{psi}.
Then $u$ is a solution to the following equation
\begin{equation} \label{kolmorel}
	 \frac{\p u}{\p t} (x,y,t) + x  \frac{\p u}{\p y} (x,y,t)  
	 = \left(1 - x^2 \right)^{5/2} \frac{\p^2 u }{ \p x^2} (x,y,t) 
	 -2x \left(1 - x^2\right)^{3/2} \frac{\p u }{ \p x} (x,y,t).
\end{equation}
\end{lemma}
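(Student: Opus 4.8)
The plan is to prove Lemma~\ref{equivalence} by a direct change of variables. Since $f$ solves $\L f = 0$ and, as noted in the excerpt, this is equivalent to $\widetilde\L f = 0$, I would start from the form \eqref{matrdiff} of the equation, which is slightly more convenient because the time derivative carries no coefficient:
\begin{equation*}
 \frac{\p}{\p p}\left(\sqrt{p^2+1}\,\frac{\p f}{\p p}\right) - \frac{p}{\sqrt{p^2+1}}\,\frac{\p f}{\p y} - \frac{\p f}{\p t} = 0.
\end{equation*}
I set $x = \varphi(p) = p/\sqrt{1+p^2}$, equivalently $p = \psi(x) = x/\sqrt{1-x^2}$, and leave $y,t$ unchanged; then $u(x,y,t) := f(\psi(x),y,t)$, i.e. $f(p,y,t) = u(\varphi(p),y,t)$. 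Note that $x$ is precisely the relativistic velocity, so the first-order term $-\frac{p}{\sqrt{p^2+1}}\p_y f = -x\,\p_y u$ transforms into the transport term $x\,\p_y u$ on the left-hand side of \eqref{kolmorel} immediately, and $\p_t f = \p_t u$. The only real work is in the diffusion term.

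For the diffusion term, I would compute $\p_p f = \varphi'(p)\,\p_x u$ using \eqref{der-varphi}, so that $\sqrt{p^2+1}\,\p_p f = \sqrt{p^2+1}\,\varphi'(p)\,\p_x u = (1+p^2)^{-1}\,\p_x u$, where I have used $\varphi'(p) = (1+p^2)^{-3/2}$. Then
\begin{equation*}
 \frac{\p}{\p p}\!\left(\sqrt{p^2+1}\,\frac{\p f}{\p p}\right) = \frac{\p}{\p p}\!\left(\frac{1}{1+p^2}\,\p_x u\right) = -\frac{2p}{(1+p^2)^2}\,\p_x u + \frac{1}{1+p^2}\,\varphi'(p)\,\p_{xx} u.
\end{equation*}
With $\varphi'(p) = (1+p^2)^{-3/2}$ this is $-\frac{2p}{(1+p^2)^2}\,\p_x u + (1+p^2)^{-5/2}\,\p_{xx} u$. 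Now I re-express the coefficients in terms of $x$ via the identity \eqref{eq-varphi-id}, namely $1+p^2 = (1-x^2)^{-1}$, which gives $(1+p^2)^{-5/2} = (1-x^2)^{5/2}$ and $\frac{2p}{(1+p^2)^2} = 2\psi(x)(1-x^2)^2 = 2x(1-x^2)^{3/2}$, using $\psi(x) = x/\sqrt{1-x^2}$. Substituting everything back into $\widetilde\L f = 0$ yields exactly
\begin{equation*}
 (1-x^2)^{5/2}\,\p_{xx} u - 2x(1-x^2)^{3/2}\,\p_x u - x\,\p_y u - \p_t u = 0,
\end{equation*}
which, after moving the transport and time terms to the left, is \eqref{kolmorel}.

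There is no serious obstacle here: the lemma is a routine, if slightly delicate, bookkeeping exercise, and the main point to be careful about is consistently converting all $p$-dependent coefficients into functions of $x$ using $1+p^2 = (1-x^2)^{-1}$ and $p = x/\sqrt{1-x^2}$, and tracking the powers of $(1-x^2)$ correctly. One should also note that $\varphi:\R \to (-1,1)$ is a smooth diffeomorphism with smooth inverse $\psi$, so $u$ inherits the $C^\infty$ regularity of $f$ on $(-1,1)\times\R^2$ and the computation above is justified pointwise. I would present the proof simply as the chain-rule computation just outlined, stating the intermediate derivative formulas and then collecting terms.
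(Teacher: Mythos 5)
Your proposal is correct and follows essentially the same route as the paper: both reduce to $\widetilde\L f=0$, set $x=\varphi(p)$ so that $f(p,y,t)=u(\varphi(p),y,t)$, and carry out the chain-rule computation, converting coefficients via $1+p^2=(1-x^2)^{-1}$. The only cosmetic difference is that you differentiate the divergence-form term $\frac{\p}{\p p}\bigl(\sqrt{p^2+1}\,\p_p f\bigr)$ directly as a product, whereas the paper first expands it into $\sqrt{p^2+1}\,\p^2_p f+\frac{p}{\sqrt{p^2+1}}\,\p_p f$ and uses $\varphi''$; the resulting identities coincide.
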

\begin{proof} By inverting the change of variable in \eqref{eq-def-u} we find that 
\begin{equation}\label{invchanvar}
x= \phi(p), 
\end{equation}
and therefore
\begin{equation*}
 f(p,y,t) = u \left(  \phi(p),  y, t \right).
\end{equation*}
Hence, from the chain rule it follows immediately 
\begin{equation}\label{eq-drif}
 \frac{p}{\sqrt{1 + p^2}} \frac{\p f}{\p y}(p,y,t) +\frac{\p f}{\p t}(p,y,t) = 
 	\varphi(p) \frac{\p u }{ \p y} ( x,y, t ) + \frac{\p u }{ \p t} ( x,y,t ).
\end{equation}
Moreover, exploiting identities \eqref{der-varphi}, \eqref{eq-varphi-id} and \eqref{invchanvar}, we obtain
\begin{equation} \label{eq-coeff-c}
\begin{split}
 	 \frac{\p f}{\p p}(p,y,t)  \, &= \varphi'(p) \frac{\p u }{ \p x} ( x,y, t ), \\
	 \frac{\p^2 f}{\p p^2}(p,y,t)  \, &=  (\varphi'(p))^2 \frac{\p^2 u }{ \p x^2} ( x,y, t )
	 +  \varphi''(p) \frac{\p u }{ \p x} ( x,y, t )\\
	&= \frac{1}{(1+p^2)^3}\frac{\p^2 u }{ \p x^2} (x,y, t )-\frac{3p}{(1+p^2)^{5/2}}\frac{\p u }{ \p x} ( x,y, t )\\
	&=\left( 1-\varphi^2(p)\right)^3\frac{\p^2 u }{ \p x^2} (  x,y, t )-3\varphi(p)
	\left( 1-\varphi^2(p)\right)^2\frac{\p u }{ \p x} ( x,y, t )\\
	&=\left( 1-x^2\right)^3\frac{\p^2 u }{ \p x^2} (  x, y, t )-3x\left( 1-x^2\right)^2\frac{\p u }{ \p x} ( x, y, t ).
\end{split}
\end{equation}
As a consequence, the diffusion term in equation \eqref{matrdiff} becomes	
\begin{equation}\label{eq-diff}
\begin{split}
&\sqrt{p^2+1}\frac{\p^2 f}{\p p^2}(p,y,t) +\frac{p}{\sqrt{p^2+1}} \frac{\p f}{\p p}(p,y,t)\\&\quad=  \left( 1-x^2\right)^{5/2}\frac{\p^2 u }{ \p x^2} (  x, y, t )-3x\left( 1-x^2\right)^{3/2}\frac{\p u }{ \p x} ( x, y, t )+x\left( 1-x^2\right)^{3/2}\frac{\p u }{ \p x} ( x, y, t )\\
&\quad=  \left( 1-x^2 \right)^{5/2}\frac{\p^2 u }{ \p x^2} (  x, y, t )-2x\left( 1-x^2\right)^{3/2}\frac{\p u }{ \p x} ( x, y, t ).
\end{split}
\end{equation}

The claim then follows combining \eqref{eq-drif} and \eqref{eq-diff} and observing that $\L f=0$ if and only if $\tilde{\L} f =0$.
\end{proof}
%

\subsection{Proof of Theorem \ref{harnack-relativistic-vero}}

We observe that the operator appearing in \eqref{kolmorel} writes in the form \eqref{kolmo} if we choose 
\begin{align} \label{def-ab}
a(x,y,t) =  \left(1 - x^2\right)^{5/2}, \qquad \text{and} \qquad b(x,y,t) = -2x \left(1 - x^2\right)^{3/2}.
\end{align}
Moreover, we remark that condition {\bf (H)} is satisfied only on compact subsets of $]-1,1[ \times \R^2$ as we have 
$$
\inf\limits_{-1 < x < 1} a(x,w,t) = 0.
$$ 

\begin{proof}[Proof of Theorem \ref{harnack-relativistic-vero}]
Since $\L$ is invariant with respect to the Lorentz transformations \eqref{group-law},
we first restrict ourselves to the case where $z_0=(p_0,y_0,t_0)=(0,0,0)$. As a consequence, owing to \eqref{invchanvar}, we have also $(x_0,y_0,t_0)=(0,0,0)$. We then observe that for every $p \in \left[ -\frac{1}{2},\frac{1}{2}\right]$, there holds
\begin{equation}  \label{constants-h}
 \left(\frac{1}{2} \right)^{5/2} \le\left( \frac{1}{1+p^2}\right)^{5/2} \le 1.
\end{equation}

We now apply the change of variable \eqref{eq-def-u} and we observe that $a(0,0,0)=1$, where $a(x,y,t)$ is the coefficient in \eqref{def-ab}. Keeping in mind that $ x = \varphi(p)$, we find that for every point $(p,y,t) \in H^{\mathcal{L}}_r(0)$, the following inequality 
\begin{equation} \label{eq-xQ}
  | x| = \left|\frac{p}{\sqrt{1+p^2}}\right| \le |p| \le r
\end{equation}
holds true.
Thus, $(  \varphi(p),  y, t) \in H^{\mathcal{G}}_r{(0)}$ for every $(p,y,t) \in H^{\mathcal{L}}_r(0)$ and for every $r \in \left]0,\frac{1}{2}\right[$. Furthermore, from definition \eqref{def-ab} it follows that 
\begin{equation*}
  a( \varphi(p),  y, t) = \left(\frac{1 }{1 + p^2}\right)^{5/2},
\end{equation*}
and therefore 
 \begin{equation*}
 \left(\frac{1}{2}\right)^{5/2} \le a( x, y, t) \le 1, \qquad
  |b(  \varphi(p),  y, t)| = 2 \frac{|p|}{\sqrt{1 + p^2}}\frac{1}{{(1 + p^2)}^{3/2} }\le 2 ,
\end{equation*}
for every $(x,y,t) = (  \varphi(p),  y, t)$ with $(p,y,t) \in H^{\mathcal{L}}_r(0)$. 
Thus, the coefficients appearing in \eqref{kolmorel} satisfy assumption \textbf{(H)} with $\lambda^-=\left(\frac{1}{2}\right)^{5/2}$ and $\l^+=2$. Since $( \varphi(p), y, t) \in H^{\mathcal{G}}_r{(0)}$ for every $(p,y,t) \in S^{\mathcal{L}}_r(0)$ and for every $r \in \left]0,\frac{1}{2}\right[$, our claim is proven for $z_0=(0,0,0)$. 

In order to prove our claim for an arbitrary point $z_0 \in \R^3$, we rely on the invariance of $\L$ with respect to \eqref{group-law}. In particular, we apply the Lorentz change of variables \eqref{inv-g} to a solution $f$ to $ \L f =0$ in $H^{\mathcal{L}}_r(z_0)$ and we observe that the function $g$ in \eqref{inv-g} is a solution to $\L g = 0$ in $H^{\mathcal{L}}_r(0)$. Then the Harnack inequality holds for $g$ and implies that
\begin{equation*}
f(p,y,t) = g((p_0,y_0,t_0)^{-1} \circ_{\mathcal{L}} (p,y,t)) \leq C_H g(0,0,0)=C_H f(p_0,y_0,t_0),
\end{equation*}
where $C_H$ does not depend on $z_0$. This concludes the proof. 
\end{proof}

\begin{remark}
We observe that the ``cylinders" defined in \eqref{eq-cyl-B-2} are the most natural geometric sets which can be defined starting from \eqref{eq-cyl-origin} and using the invariance group of $\L$. Finally, let us remark that, in virtue of \eqref{group-law}, the sets \eqref{eq-cyl-B-2} can be explicitely computed as follows
\begin{equation}\label{cyl-explicit}
\begin{split}
	H_{r}^{\mathcal{L}}(z_{0}) 
	&:= \left\{ (p,y,t) \in \R^3 \mid \left| p\sqrt{1+p_0^2} - p_0\sqrt{1+p^2}\right| < r, \right.\\
	&\quad\quad\quad\left.	\left|\sqrt{1+p_0^2}(y - y_0) - p_0 (t-t_0) \right| < r^3, 
	-r^2 < \sqrt{1+p_0^2}(t-t_0)-p_0(y - y_0) < 0 \right\},\\
	S_{r}^{\mathcal{L}}(z_{0}) &:= \left\{ (p,y,t) \in \R^3 \mid \left| p\sqrt{1+p_0^2} - p_0\sqrt{1+p^2}\right| < r, \right.\\
		&\quad\quad\quad\quad\quad\quad\quad \left.	\left|\sqrt{1+p_0^2}(y - y_0) - p_0 (t-t_0) \right| < r^3, \right. \\
		&\quad\quad\quad\quad\quad\quad\quad \left. -r^2 \leq \sqrt{1+p_0^2}(t-t_0)-p_0(y - y_0) \leq-r^2/2 \right\}.
\end{split}
\end{equation} 
However, we do not need the explicit expression \eqref{cyl-explicit} in our treatment, as it is sufficient to rely on definition \eqref{eq-cyl-B-2} and on the invariance properties of $\L$.
\end{remark}

\setcounter{equation}{0}\setcounter{theorem}{0}
\section{Optimal control problem} \label{bounds}
This section is devoted to the proof of Theorem \ref{boundsL}. In order to provide a clear treatment, we first recall some fundamental notions from control theory and prove an equivalent statement of the Harnack inequality, more suitable 
to the construction of Harnack chains (see Proposition \ref{prop-cor}). We then prove an estimate for positive solutions to $\L f = 0$ (see Proposition \ref{prop-Phi}) depending on the
norm of the control. Finally, we conclude this section with a preliminary study of the optimal control problem associated to $\L$.

\subsection{$\L$-admissible paths and Harnack chains}
Along with the Harnack inequality Theorem \ref{harnack-relativistic-vero}, the main tool in the proof of our asymptotic estimates for the fundamental solution are \textit{Harnack chains}, whose definition we recall below.
\begin{definition}[Harnack chain]
Let $\Omega$ be an open subset of $\R^3$. We say that a finite set of points $\lbrace z_0,z_1,\ldots,z_k\rbrace \in \Omega$ is a {\rm Harnack chain} connecting $z_0$ to $z_k$ if there exist positive constants $C_1, \ldots,C_k$ such that
\begin{equation*}
u(z_j) \leq C_j u(z_{j-1})\qquad j=1,\ldots,k,
\end{equation*}
for every positive solution $u$ to $\L u=0$.
\end{definition}

In the present setting, we construct Harnack chains by connecting points belonging to appropriate trajectories, which naturally substitute segment lines in our non- Euclidean setting and are defined as follows.

\begin{definition}[$\L$--admissible path]
A curve $\gamma(s)=(p(s),y(s),t(s)):[0,T] \rightarrow \R^3$ is said to be {\rm a $\L$--admissible path} if it is absolutely continuous and solves 
the following differential equation
\begin{equation} \label{eq-admiss}
  \gamma'(s) = \omega(s) X(\gamma(s)) + Y(\gamma(s)), 
\end{equation}
for almost every $s \in [0,T ]$, where $X$ and $Y$ are defined in \eqref{L1dh}. Moreover, we say that $\gamma$ steers $(p_0,y_0,t_0)$ to $(p_1,y_1,t_1)$, with $t_0 > t_1$, if 
\begin{equation} \label{eq-admiss-pt}
  \gamma(0)=(p_0,y_0,t_0), \qquad  \gamma(T)=(p_1,y_1,t_1).
\end{equation}
\end{definition}
In the definition of $\L$--admissible path we assume $\omega \in L^2([0,T])$ and we refer to the function $\omega$ as the control of problem \eqref{eq-admiss}. Let us remark that, owing to \eqref{L1dh}, equation \eqref{eq-admiss} can be explicitly written as follows
\begin{equation} \label{expl-diffeq} 
\begin{split}
 &\quad\qquad\qquad\qquad\begin{cases}
 		p'(s)&=\omega(s)\sqrt{p^2(s)+1},  \\
 		y'(s) &=- p(s),\\
 		t'(s) &=-\sqrt{p^2(s)+1},
 	\end{cases} 
\end{split}
\end{equation}
for almost every $s \in [0,T]$.

Moreover, we observe that such optimal control problem is invariant with respect to the group operation \eqref{group-law}. Indeed, let us consider a control $\omega(\cdot)$ steering $(p_0,y_0,t_0)$ to $(p_1,y_1,t_1)$ with trajectory $(p(s),y(s),t(s))$. Then, it is easy to prove the trajectory $(\tilde{p}(s),\tilde{y}(s),\tilde{t}(s)):=(p_0,y_0,t_0)^{-1}\, \circ_{\mathcal{L}}\, (p(s),y(s),t(s))$ is a solution to \eqref{eq-admiss}-\eqref{eq-admiss-pt} with the same control $\omega(\cdot)$. Additionally, the newly defined trajectory $(\tilde{p}(s),\tilde{y}(s),\tilde{t}(s))$ satisfies the properties $(\tilde{p}(0),\tilde{y}(0),\tilde{t}(0))=(0,0,0)$.

Finally, we introduce the standard definition of attainable set from control theory.

\begin{definition}[Attainable set]
For every $z_0 \in \Omega \subset \R^3$, the {\rm attainable set $\mathscr{A}_{z_0}$ of $z_0$ in $\Omega$} is defined as follows
\begin{align*}
\mathscr{A}_{z_0}=\left\{ z \in \Omega \, : \, \textit{there exists $\bar{t} \in \R^+$ and a $\L$-admissible path $ \gamma:[0,\bar{t}]\rightarrow \Omega$ }	\right.	\\
\left. \qquad \textit{ such that $\gamma(0)=z_0$, $\gamma(\bar{t})=z$}\right\}
\end{align*}
\end{definition}

\medskip

Now, our aim is to derive from Theorem \ref{harnack-relativistic-vero} a statement of the Harnack inequality which is 
useful for the construction of Harnack chains. First of all, we define the positive cone
\begin{align}	
	\label{paraboloid}
	P_{r} (0)= \Big \{ (p,y,t) \in \R^3 : \, & | p | < t^{\frac{1}{2}}, 
						 | y  | < t^\frac{3}{2}, \, - \theta^2 r^2 \le t < 0 \Big \}.
	\end{align}
Moreover, in analogy with the definition of $H_r^{\mathcal{L}}(z_0)$ and $S_r^{\mathcal{L}}(z_0)$ in \eqref{eq-cyl-B-2}, we set
$ P^{\mathcal{L}}_{r}(z_0) := z_0 \circ_{\mathcal{L}}P_r(0)$. We are now in a position to state the following result, whose proof can be found in \cite[Proposition 3.2]{PaPo06}.
\begin{theorem}
	\label{harnack-2}
	Let $\Omega$ be an open set in $\R^{3}$ containing $H_r(z_0)$ for some $z_0 \in \R^3$ and $r\in \left]0, \frac12\right[$. Then
	\begin{equation*}
		f(z_0 \circ_{\mathcal{L}} z) \le C_H f(z_0) 
	\end{equation*}
	for every non negative solution $f$ of $\mathcal{L} f = 0$ in $\Omega$ and for every $z \in P_{r} (0)$.
	
\end{theorem}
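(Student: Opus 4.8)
The plan is to derive Theorem \ref{harnack-2} from the basic Harnack inequality Theorem \ref{harnack-relativistic-vero} by a single application of the latter, after reducing to the case $z_0=0$ by Lorentz invariance and observing that the positive cone $P_r(0)$ of \eqref{paraboloid} is covered by the lower faces $\{t=-\rho^2\}$ of the cylinders $S_\rho(0)$ as $\rho$ ranges over $(0,\theta r]$. No Harnack chain is actually needed; the whole point is a careful matching of radii.

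First I would reduce to the origin. If $f\ge 0$ solves $\L f=0$ in $\Omega\supseteq H^{\mathcal{L}}_r(z_0)$, set $g:=f(z_0\circ_{\mathcal{L}}\,\cdot\,)$. By the invariance of $\L$ recorded in \eqref{inv-g} (and already used in the proof of Theorem \ref{harnack-relativistic-vero}), $g$ is a non-negative solution of $\L g=0$ on $z_0^{-1}\circ_{\mathcal{L}}\Omega$, which contains $z_0^{-1}\circ_{\mathcal{L}}H^{\mathcal{L}}_r(z_0)=H_r(0)$; moreover the claimed inequality $f(z_0\circ_{\mathcal{L}}z)\le C_H f(z_0)$ is exactly $g(z)\le C_H g(0)$. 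Hence it suffices to prove: for every $z\in P_r(0)$ and every non-negative solution $g$ of $\L g=0$ on an open set containing $H_r(0)$, one has $g(z)\le C_H g(0)$, with $C_H,\theta$ the constants of Theorem \ref{harnack-relativistic-vero}.

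Next, fix $z=(p,y,t)\in P_r(0)$, so that by \eqref{paraboloid} we have $-\theta^2 r^2\le t<0$, $|p|<|t|^{1/2}$ and $|y|<|t|^{3/2}$. I would set
\[
\rho:=\sqrt{-t}\in(0,\theta r],\qquad r':=\rho/\theta\le r .
\]
Since $r'\le r<\tfrac12$, the cylinder $H_{r'}(0)=H^{\mathcal{L}}_{r'}(0)$ is contained in $H_r(0)$, so $g$ is a non-negative solution of $\L g=0$ in $H^{\mathcal{L}}_{r'}(0)$. Because $0$ is the identity of $(\R^3,\circ_{\mathcal{L}})$ we have $S^{\mathcal{L}}_{\theta r'}(0)=S_{\theta r'}(0)=S_\rho(0)$, and $z$ lies in this set: indeed $|p|<|t|^{1/2}=\rho$, $|y|<|t|^{3/2}=\rho^3$, and $t=-\rho^2$ satisfies $-\rho^2\le t\le -\rho^2/2$. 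Applying Theorem \ref{harnack-relativistic-vero} with the scale $r'$ then gives $g(z)\le \sup_{S^{\mathcal{L}}_{\theta r'}(0)} g \le C_H\,g(0)$, and undoing the reduction yields $f(z_0\circ_{\mathcal{L}}z)\le C_H f(z_0)$.

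There is no serious obstacle in this argument: it is entirely a bookkeeping of scales. The only point requiring attention is to make sure $r'=\rho/\theta$ stays $\le r$, so that the cylinder $H^{\mathcal{L}}_{r'}(0)$ on which the basic Harnack inequality is invoked remains inside $\Omega$ — and this is precisely why $P_r(0)$ in \eqref{paraboloid} is truncated at $t=-\theta^2 r^2$ rather than at $t=-r^2$ — together with the (elementary) check that the closed $t$-interval $[-\rho^2,-\rho^2/2]$ appearing in the definition of $S_\rho(0)$ contains the endpoint value $t=-\rho^2$.
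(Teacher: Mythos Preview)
Your argument is correct. The paper does not actually supply a proof of this statement in the text; it merely records that ``the proof can be found in \cite[Proposition 3.2]{PaPo06}''. The standard proof of this kind of cone--to--slab Harnack result is exactly the scaling trick you wrote down: reduce to $z_0=0$ by left translation, set $\rho=\sqrt{-t}$ and apply the basic Harnack inequality at scale $r'=\rho/\theta\le r$, observing that the point $z$ sits on the bottom face $\{t=-\rho^2\}$ of $S_\rho(0)=S^{\mathcal L}_{\theta r'}(0)$. Your bookkeeping of radii (in particular the reason for the truncation at $t=-\theta^2r^2$) is the right one, and no Harnack chain is needed. One cosmetic remark: the statement in the paper writes $H_r(z_0)$ rather than $H^{\mathcal L}_r(z_0)$, which is a minor notational slip you have interpreted correctly.
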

Next, we show that the trajectories defined in \eqref{eq-admiss} belong to a certain positive cone provided a suitable choice of the parameter $s \in [0,T]$.
\begin{lemma} \label{prop-path}
	Let $s \in [0,T]$, $\omega \in L^2([0,T])$ be a control and let $\gamma(s)=(p(s),y(s),t(s))$ be an $\L$-admissible 
	path starting from $z_0=(p_0,y_0,t_0) \in \R^3$. 
	Then for every $r \in \left]0, \frac12 \right[$ there exist two positive constants 
	$k_0:= 2\ln \left( \frac32 \right)$ and $\theta \in ]0,1[$,
	only depending on operator $\L$, such that
	\begin{align*}
		 \gamma(s) \in P^{\mathcal{L}}_{r}(z_0),
	\end{align*}
for every $s \in \left[0,\sqrt{\frac{{2}}{{3}}}\theta^2\,r^2\right]$ such that
	\begin{equation*}
		  \int_0^s | \omega(\tau)|^2 d \tau \le k_0^2.
	\end{equation*}
\end{lemma}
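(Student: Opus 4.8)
The plan is to integrate the explicit system \eqref{expl-diffeq} and estimate the three components of $\gamma(s)$ directly, showing they satisfy the defining inequalities of the cone $P_r(0)$ after the inversion $z_0^{-1}\circ_{\mathcal L}\gamma(s)$. By the invariance remark following \eqref{expl-diffeq}, it suffices to treat the case $z_0=(0,0,0)$, so that $\gamma(0)=(0,0,0)$ and I must show $(p(s),y(s),t(s))\in P_r(0)$, i.e. $|p(s)|<(-t(s))^{1/2}$, $|y(s)|<(-t(s))^{3/2}$ and $-\theta^2r^2\le t(s)<0$, for the stated range of $s$. The first step is to control $|p(s)|$: from $p'(s)=\omega(s)\sqrt{p^2(s)+1}$ one gets, by separating variables, $\operatorname{arcsinh} p(s)=\int_0^s\omega(\tau)\,d\tau$, hence by Cauchy--Schwarz $|\operatorname{arcsinh} p(s)|\le \sqrt{s}\,\bigl(\int_0^s|\omega|^2\bigr)^{1/2}\le \sqrt{s}\,k_0$. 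Choosing the range $s\le\sqrt{2/3}\,\theta^2 r^2\le\sqrt{2/3}$ (since $\theta<1$, $r<1/2$) and $k_0=2\ln(3/2)$, this yields a uniform bound $|p(s)|\le\sinh(\sqrt{s}\,k_0)$, which for $s$ small is comparable to $\sqrt{s}$; this is where the specific constants $k_0$ and the factor $\sqrt{2/3}$ are calibrated so that $|p(s)|\le\sqrt{-t(s)}$.

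The second step is the time component: since $t'(s)=-\sqrt{p^2(s)+1}$ and $1\le\sqrt{p^2(s)+1}\le\sqrt{\sinh^2(\sqrt{s}k_0)+1}=\cosh(\sqrt{s}\,k_0)$, integrating gives $s\le -t(s)\le \int_0^s\cosh(\sqrt\tau\,k_0)\,d\tau$. The lower bound $-t(s)\ge s$ is the crucial one: combined with $|p(s)|\le\sinh(\sqrt s\,k_0)$ it reduces the claim $|p(s)|<\sqrt{-t(s)}$ to $\sinh(\sqrt s\,k_0)\le\sqrt s$, which holds precisely because $\sqrt s\,k_0\le\sqrt{2/3}\,k_0$ is small enough (here one checks $\sinh(x)\le x\cdot\text{const}$ and absorbs the constant; this fixes the numerical value of $k_0$). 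The condition $-t(s)\ge s\ge$ something, together with the upper bound $-t(s)\le Cs\le C\sqrt{2/3}\,\theta^2r^2$, is also what forces the constraint $-\theta^2r^2\le t(s)$: one needs the upper bound on $\int_0^s\cosh(\sqrt\tau\,k_0)\,d\tau$ to be $\le\theta^2r^2$, which determines $\theta$ in terms of the (absolute) constants of the problem.

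The third step is the position component: from $y'(s)=-p(s)$ and $|p(\tau)|\le\sinh(\sqrt\tau\,k_0)$ one gets $|y(s)|\le\int_0^s\sinh(\sqrt\tau\,k_0)\,d\tau$, which for small $s$ is $O(s^{3/2})$, and I must check this is $\le(-t(s))^{3/2}$; since $-t(s)\ge s$ this reduces to $\int_0^s\sinh(\sqrt\tau\,k_0)\,d\tau\le s^{3/2}$, again a small-argument estimate valid on the chosen range. Finally, for general $z_0$ I invoke the translation invariance: $(\tilde p,\tilde y,\tilde t)(s)=z_0^{-1}\circ_{\mathcal L}\gamma(s)$ is itself an $\L$-admissible path with the same control $\omega$ starting from the origin, so the above applies to it and gives $\gamma(s)\in z_0\circ_{\mathcal L}P_r(0)=P_r^{\mathcal L}(z_0)$. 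The main obstacle is purely bookkeeping: pinning down the three numerical inequalities $\sinh(\sqrt s\,k_0)\le\sqrt s$, $\int_0^s\sinh(\sqrt\tau k_0)\,d\tau\le s^{3/2}$, and $\int_0^s\cosh(\sqrt\tau k_0)\,d\tau\le\theta^2r^2$ so that a single pair $(k_0,\theta)$ works uniformly on $s\in[0,\sqrt{2/3}\,\theta^2r^2]$ for all $r\in]0,1/2[$ — essentially verifying that $k_0=2\ln(3/2)$ is small enough and then reading off an admissible $\theta$. None of these steps is deep; the care is in making the constants consistent with the ones already named in the statement.
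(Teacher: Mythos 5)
Your proposal is correct and follows essentially the same route as the paper: reduce to $z_0=(0,0,0)$ by Lorentz invariance, integrate $p'=\omega\sqrt{p^2+1}$ to get $\operatorname{arcsinh}p(s)=\int_0^s\omega$, apply Cauchy--Schwarz with the budget $k_0^2$, deduce $|p(s)|\le\sqrt{s}$, and then integrate to bound $|y(s)|$ by $s^{3/2}$ and $-t(s)$ by $\theta^2r^2$. The only differences are cosmetic (you phrase the key elementary estimate as $\sinh(k_0\sqrt{s})\le\sqrt{s}$ where the paper uses concavity of $\ln(1+x)$, and you make explicit the inequality $-t(s)\ge s$ that the paper leaves implicit), so no further changes are needed.
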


\begin{proof}
Without loss of generality, we fix $z_0=(0,0,0)$ and we give proof of this result for a given $\L$-admissible path starting from $(0,0,0)$. 
The general case directly follows from the translation invariance with respect to the group law \eqref{group-law}. 

Thus, we begin by considering the first component of $\gamma(s)$. In virtue of \eqref{expl-diffeq}, for every $s>0$, we have
\begin{equation*}
	\int\limits_0^s p'(\tau)d\tau = \int \limits_0^s \omega(\tau)\sqrt{p^2(\tau)+1} \, d \tau 
\end{equation*}
and therefore 
\begin{equation}\label{arcsinh}
	 \int \limits_0^s \omega(\tau) \, d \tau  =
	\int\limits_0^s \frac{p'(\tau)}{\sqrt{p^2(\tau)+1}}d\tau=\sinh^{-1}(p(s))=\ln \left(p(s)+\sqrt{p^2(s)+1}\right).
\end{equation}

Now, we apply H\"older's inequality and we estimate the $L^2$ norm of the control with $k_0$ to get
\begin{equation} \label{controllo}
	\left\vert \int \limits_0^s \omega(\tau) \, d \tau \right\vert \leq 
	\left( \int \limits_0^s |\omega(\tau)|^2 \, d \tau \right)^{\frac12} \sqrt{s} \le \,
	k_0 \sqrt{s} \le \ln (1 + \sqrt{s}), \qquad  \forall s \in \left[0,\frac{1}{4}\right],
\end{equation}
We observe that the last inequality follows from our choice of $k_0$ and the concavity of $\ln(1+x)$, which in particular implies that $\ln(1+x) \geq 2 \ln (3/2)x$ for every $x \in [0,1/2]$.
As a consequence,
\begin{equation}\label{e-est}
	\left\vert e^{ \int_0^s \omega(\tau) \, d \tau}-1 \right\vert \leq  
  e^{\left\vert \int_0^s \omega(\tau) \, d \tau \right\vert}-1  
	\, \le \sqrt{s}, \qquad \forall
		 s \in \left[0,\sqrt{\frac{{2}}{{3}}}\theta^2\,r^2\right].
\end{equation}
Then, combining \eqref{arcsinh}, \eqref{controllo} and \eqref{e-est}, we obtain
\begin{equation} \label{sol-v}
	|p(s)|\leq|p(s) +\sqrt{p^2(s)+1} -1| 
	\le  \sqrt{s}, \qquad \forall s \in \left[0,\sqrt{\frac{{2}}{{3}}}\theta^2\,r^2\right].
\end{equation}
Next, we consider the second component of $\gamma(s)$, that is
\begin{equation*}
	y(s) = - \int \limits_0^s p(\tau) \, d \tau .
\end{equation*}
Owing to \eqref{sol-v}, we immediately get
\begin{equation} \label{y-est}
	\left| y(s)  \right| \le	\int \limits_0^s \sqrt{\tau} \, d \tau =\frac23 
	 s^{\frac32}< s^{\frac32},
	\qquad \forall s \in \left[0,\sqrt{\frac{{2}}{{3}}}\theta^2\,r^2\right].
\end{equation}
By combining the above inequality, with \eqref{sol-v}, we obtain 
\begin{align} \label{t-est}	
	0\leq- t(s) =  \int\limits_0^s \sqrt{p^2(\tau)+1} \, d\tau   \le  \int\limits_0^s \sqrt{\tau+1} d\tau  \leq \sqrt{\frac{3}{2}}\, s \leq \theta^2 r^2, \qquad  \forall s \in \left[0,\sqrt{\frac{{2}}{{3}}}\theta^2\,r^2\right].
\end{align} 
Hence,
\begin{equation*}
  	|p(s) | \le  s^{\frac{1}{2}}, \quad 
  	\left|y(s) \right| \le s^{\frac32}, 
 	 \quad  - \theta^2 r^2 \le t(s) \le 0,
	 \qquad  \forall s \in \left[0,\sqrt{\frac{{2}}{{3}}}\theta^2\,r^2\right].
\end{equation*}
This concludes the proof.
\end{proof}

Finally, we are in a position to prove a more suitable statement of the Harnack inequality for points of 
an admissible trajectory. 
\begin{proposition} \label{prop-cor}
	Let $T>0$, $R>0$ and $z_0=(p_0,y_0,t_0)  \in \R^3$.
	Let $s \in [0,T]$, $\omega \in L^2([0,T])$ be a control and let $\gamma(s)=(p(s),y(s),t(s))$ be an $\L$-admissible 
	path starting from $z_0$.
	Then, for every non negative solution $f$ 
	to $ \L f = 0$ in $H^{\mathcal{L}}_{R}(z_0)$,
	there exist three positive constants 
	$k_0:= 2\ln \left( \frac32 \right)$, $C_H$ and $\theta \in ]0,1[$,
	only depending on operator $\L$, such that
	\begin{equation*}
		f(\gamma(s)) \, \le \, C_{H} f(z_0),
	\end{equation*}
	for every $s \in 
		 \left[ 0, \sqrt{\frac{2}{3}}\,\theta^2 r^2 \right]$
	such that
	\begin{equation*}
		  \int_0^s | \omega(\tau)|^2 d \tau \le k_0^2.
	\end{equation*}
\end{proposition}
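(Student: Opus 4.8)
The plan is to derive Proposition~\ref{prop-cor} as an immediate splicing of Lemma~\ref{prop-path} with Theorem~\ref{harnack-2}, both of which are already stated in a form invariant under the group law \eqref{group-law}, so that no preliminary reduction to the origin is needed. Throughout, $\theta \in ]0,1[$ and $C_H>0$ will denote the constants of the Harnack inequality Theorem~\ref{harnack-relativistic-vero}, and $k_0 := 2\ln(3/2)$; we set $r := \min\{R,1/2\} \in ]0,1/2]$, so that $H^{\mathcal{L}}_r(z_0) \subseteq H^{\mathcal{L}}_R(z_0)$.

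First I would apply Lemma~\ref{prop-path} to the $\L$-admissible path $\gamma$ starting from $z_0$: its hypotheses are exactly $\int_0^s |\omega(\tau)|^2\, d\tau \le k_0^2$ and $s \in \left[0, \sqrt{2/3}\,\theta^2 r^2\right]$, which are assumed here, so the lemma yields $\gamma(s) \in P^{\mathcal{L}}_r(z_0) = z_0 \circ_{\mathcal{L}} P_r(0)$. Write $\gamma(s) = z_0 \circ_{\mathcal{L}} z$ with $z \in P_r(0)$; since $|t| \le \theta^2 r^2$ on $P_r(0)$ and $\theta < 1$, comparing \eqref{paraboloid} with \eqref{eq-cyl-origin} gives $P_r(0) \subset H_r(0)$, hence $\gamma(s) \in H^{\mathcal{L}}_r(z_0) \subseteq H^{\mathcal{L}}_R(z_0)$ and $f(\gamma(s))$ is well defined.

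Next I would invoke Theorem~\ref{harnack-2} with $\Omega := H^{\mathcal{L}}_R(z_0)$, which by the choice of $r$ contains $H^{\mathcal{L}}_r(z_0)$. Applied to the nonnegative solution $f$ of $\L f = 0$ in $\Omega$ and to the point $z \in P_r(0)$ produced above, it gives
\[
  f(\gamma(s)) = f(z_0 \circ_{\mathcal{L}} z) \le C_H\, f(z_0),
\]
which is exactly the asserted inequality, with $k_0$, $C_H$ and $\theta$ depending only on $\L$.

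I do not expect a genuinely hard step: the proof is a bookkeeping combination of the two preceding results. The points worth a line of care are that the $\theta$ used to apply Lemma~\ref{prop-path} must coincide with the one required by Theorem~\ref{harnack-2} (both originate from Theorem~\ref{harnack-relativistic-vero}, so one uses the common value, or the minimum if they were recorded separately), and that the inclusion $P^{\mathcal{L}}_r(z_0) \subseteq H^{\mathcal{L}}_R(z_0)$ --- needed so that $f(\gamma(s))$ lies in the set where $f$ solves the equation --- is what forces the radius to be taken as $r = \min\{R,1/2\}$.
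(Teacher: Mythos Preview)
Your proposal is correct and follows essentially the same route as the paper, which dispatches the proof in one line by ``combining Theorem~\ref{harnack-relativistic-vero} with Proposition~\ref{prop-path}''. Your version is slightly more explicit in that you invoke the cone formulation Theorem~\ref{harnack-2} rather than the cylinder formulation Theorem~\ref{harnack-relativistic-vero} directly, and you spell out the bookkeeping choice $r=\min\{R,1/2\}$ and the inclusion $P_r(0)\subset H_r(0)$; these are exactly the details hidden behind the paper's one-sentence proof.
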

\begin{proof}
	 The result directly follows by combining Theorem \ref{harnack-relativistic-vero} with Proposition \ref{prop-path}.
\end{proof}

\subsection{Optimal control problem}
We are state and prove an useful intermediate result, which provides us with an estimate for any positive solution $f$ to 
$\L f = 0$ at any point of a given $\L$-admissible path in terms of the $L^2$-norm of the control. Results of this kind are 
usually referred as \textit{non local} Harnack inequalities. In particular, our result is an extension of \cite[Theorem 1.1]{PaPo06} 
and \cite[Proposition 1.1]{BoPo}. For this reason, we hereby report only a sketch of the proof and for further details we refer the reader to \cite{PaPo06, BoPo}.

\begin{proposition}\label{prop-Phi}
	Let $z_0=(p_0,y_0, t_0) \in \R^2 \, \times \, ]T_0, T_1]$ and let $\omega \in L^2([0,T])$ be a control and $\gamma(s)=(p(s),y(s),t(s))$ be the corresponding $\L$-admissible path starting 
	from $z_0=(p_0,y_0,t_0)\in \R^3$. Moreover, let us fix $\,T_0 < t(s) < t_0 < T_1\,$, with $t_0-t(s) \leq \theta^2 (t_0-T_0) \leq \frac{\theta^4}{4}$. 
	Then, for every non negative solution $f$ to $\L f =0$ in $\R^2\, \times \,]T_0, T_1]$, there exist three positive constants 
	$k_0$, $\theta$, $C_H$, only depending on operator $\L$, such that
	\begin{equation}\label{bound-phi}
		f(\gamma(s)) \leq C_H^{\frac{\Phi(\omega)}{k_0^2}+1} f(z_0),
	\end{equation}
where
	\begin{equation} \label{total-cost}
		\Phi(\omega)=\int_0^{s}|\omega(\tau)|^2 d\tau.
	\end{equation}
\end{proposition}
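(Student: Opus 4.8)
The plan is to reduce the global statement to a chain of local Harnack inequalities applied along the $\L$-admissible path $\gamma$. The key idea is that Proposition \ref{prop-cor} gives us a one-step estimate $f(\gamma(s)) \le C_H f(z_0)$ whenever the elapsed parameter $s$ is small (at most $\sqrt{2/3}\,\theta^2 r^2$) \emph{and} the accumulated control energy $\int_0^s |\omega|^2$ is at most $k_0^2$. The hypothesis $t_0 - t(s) \le \theta^2(t_0 - T_0) \le \theta^4/4$ is exactly what guarantees that the path stays in a cylinder $H^{\mathcal L}_R(z_0)$ with $R \le 1/2$, so the local Harnack machinery applies. To obtain \eqref{bound-phi}, I would split the interval $[0,s]$ into consecutive subintervals $0 = s_0 < s_1 < \dots < s_n = s$ chosen so that on each piece the control energy is controlled by $k_0^2$; concatenating the corresponding Harnack inequalities yields $f(\gamma(s)) \le C_H^n f(z_0)$, and the whole point is to bound $n$ by $\Phi(\omega)/k_0^2 + 1$.

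First I would fix the partition: define $s_{j}$ recursively as the largest time $\le s$ with $\int_{s_{j-1}}^{s_{j}} |\omega(\tau)|^2\, d\tau \le k_0^2$, stopping once $s_j = s$. Because the total energy is $\Phi(\omega)$, a standard counting argument shows that the number of subintervals satisfies $n \le \Phi(\omega)/k_0^2 + 1$: each full subinterval except possibly the last consumes exactly $k_0^2$ of the energy budget (or we could have extended it), so $(n-1) k_0^2 \le \Phi(\omega)$. One technical point: I also need each subinterval to be short enough in the parameter $s$ for Proposition \ref{prop-cor} to apply with the relevant radius; here the constraint $t_0 - t(s) \le \theta^2(t_0-T_0)$ combined with the lower bound $t'(\tau) = -\sqrt{p^2+1} \le -1$ from \eqref{expl-diffeq} gives $s \le t_0 - t(s) \le \theta^4/4$, and then refining the partition if necessary so that each $s_j - s_{j-1} \le \sqrt{2/3}\,\theta^2 r^2$ for the appropriate $r$ costs at most a further bounded factor, which can be absorbed into the constants — or, following \cite{PaPo06, BoPo}, one chooses the radii adaptively along the path so that this is automatic.

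Then I would apply Proposition \ref{prop-cor} on each subinterval. On $[s_{j-1}, s_j]$, the restriction of $\gamma$ is itself an $\L$-admissible path starting from $\gamma(s_{j-1})$ (with the translated control $\omega(\cdot + s_{j-1})$), the elapsed time is admissibly small, and the accumulated energy is $\le k_0^2$ by construction; hence $f(\gamma(s_j)) \le C_H f(\gamma(s_{j-1}))$. Here I use that $f$, being a nonnegative solution of $\L f = 0$ on the whole strip $\R^2 \times\, ]T_0,T_1]$, is in particular a nonnegative solution on each cylinder $H^{\mathcal L}_R(\gamma(s_{j-1}))$ needed — the strip hypothesis is what makes all these cylinders available at once. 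Multiplying the $n$ inequalities telescopically gives $f(\gamma(s)) \le C_H^n f(z_0)$, and inserting $n \le \Phi(\omega)/k_0^2 + 1$ yields \eqref{bound-phi}.

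The main obstacle I anticipate is the bookkeeping that ensures the two competing constraints on each subinterval — smallness of the control energy \emph{and} smallness of the parameter length relative to the Harnack radius $r$ — can be satisfied simultaneously while keeping the count $n$ under control. In the regime dictated by the hypothesis $t_0 - t(s)\le \theta^4/4$, the parameter $s$ is already so small that one expects a single Harnack step, or boundedly many, to suffice for the ``time'' constraint, so the genuinely accumulating count comes only from the energy subdivision; making this precise is exactly where \cite[Theorem 1.1]{PaPo06} and \cite[Proposition 1.1]{BoPo} do the work, and I would invoke their argument rather than reproduce it, noting only the modifications needed because here the vector field $X = \sqrt{p^2+1}\,\p_p$ and the drift $Y$ are the relativistic ones — but these modifications are purely in the constants, since Lemma \ref{prop-path} already packages the relevant estimates on $p(s), y(s), t(s)$ in the correct homogeneous form.
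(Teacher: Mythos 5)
Your proposal is correct and follows essentially the same route as the paper: partition $[0,s]$ by accumulated control energy into at most $\Phi(\omega)/k_0^2+1$ blocks, apply Proposition \ref{prop-cor} on each block (with radii chosen adaptively as $r_j=\sqrt{t_j-t_{j+1}}/\theta$, which is precisely how the paper resolves the ``time versus energy'' bookkeeping you flag, using the hypothesis $\theta^2(t_0-T_0)\le\theta^4/4$ to guarantee $r_j\le 1/2$), and telescope the resulting Harnack chain. No essential difference from the paper's argument.
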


\begin{proof}
	Let $k_0$, $C_H$ and $\theta$ be the constants of Proposition \ref{prop-cor}. 
We first observe that, if  
$$
\int_0^{s} |\omega(\tau)|^2 d\tau \leq k_0^2,
$$ 
then
\begin{equation*}
\gamma(s) \in P^{\mathcal{L}}_{r}(z_0), \quad r:= \sqrt{t_0-T_0} \leq \frac{1}{2},
\end{equation*}
in virtue of Proposition \ref{prop-path} and assumption $t_0-t(s) \leq \theta^2(t_0-T_0)$. Since $H^{\mathcal{L}}_r(z_0) \subset \R^2 \, \times  \,  \times ]T_0,T_1[$ thanks to our choice of $r$, Proposition \ref{prop-cor} can be applied and there holds $f(p,y,t)\leq C_H\, f(z_0)$, where $C_H$ is the constant given by Theorem \ref{harnack-relativistic-vero}.

If the above inequality is not satisfied, we set
\begin{equation}\label{def-k}
k=\max \left\lbrace j \in \mathbb{N}\, : \, \int_0^{s} |\omega(\tau)|^2 d\tau > jk_0^2 \right\rbrace
\end{equation} 

and define recursively a sequence of times starting from $\sigma_0 \equiv 0$ as follows
\begin{equation}\label{sigma-seq}
	\sigma_{j}=\min \Bigg\lbrace s, \, \inf \left\lbrace \sigma >0\, : \, \int_0^\sigma |\omega(\tau)|^2 d\tau > jk_0^2 \right\rbrace \Bigg\rbrace,
\end{equation}
for every $j=1,\ldots,k+1$.
Thanks to \eqref{def-k}, the sequence in \eqref{sigma-seq} ends after a finite number of steps when the upper bound $\sigma_{k+1}\equiv s$ is reached. Moreover, for every $j=0,\ldots,k+1$, we define the sequence $t_j=t(\sigma_j)$, which satisfies $t(s)\equiv t_{k+1} < t_k < t_{k-1}< \ldots t_1 < t_0$. We now observe that 
\begin{equation*}
	H^{\mathcal{L}}_{r_j}(\gamma(\sigma_j)) \subset \R^2 \times ]T_0,T_1[, \quad \textit{for $r_j=\frac{\sqrt{t_{j}-t_{j+1}}}{\theta}$, $j=1,\ldots,k$} .
\end{equation*}
In addition, we clearly have $t_{j}-t_{j+1}\leq \theta^2 \, r_j^2$ and $r_j \leq \frac{1}{2}$, since $\frac{t_j-t_{j+1}}{\theta^2}\leq \frac{t_0-T_0}{\theta^2}\leq \frac{1}{4}$.
Finally, as $\int_0^{\sigma_1}|\omega(\tau)|^2d\tau \leq k_0^2$, we can apply Proposition \ref{prop-cor} and get 
$f(\gamma(\sigma_1)) \leq C_H f(\gamma(0))=C_H f(z_0)$. Similarly, owing to $\int_{\sigma_1}^{\sigma_2}|\omega(\tau)|^2d\tau \leq k_0^2$ and applying again Proposition \ref{prop-cor} to the trajectory steering $(p_1,y_1,t_1):=\gamma(\sigma_1)$ to $(p_2,y_2,t_2):=\gamma(\sigma_2)$, we obtain $f(\gamma(\sigma_2))\leq C_H f(\gamma(\sigma_1))\leq C_H^2 f(z_0) $.
We then iterate the above argument until at step $k+1$ and we obtain
\begin{equation*}
f(\gamma(s))\leq C_H^{k+1} f(z_0).
\end{equation*}
We point out that the points $(\gamma(\sigma_j))_{j=1}^{k}$, chosen along the trajectory 
$\gamma(\cdot)$, define a Harnack chain. Finally, from \eqref{def-k}, it follows that
\begin{equation*}
k < \frac{\int_0^{s} |\omega(\tau)|^2 d\tau}{k_0^2},
\end{equation*}
and this concludes the proof of Proposition \ref{prop-Phi}.
\end{proof}

Estimate \eqref{bound-phi} provides us with a bound dependent on the choice of the $\L$-admissible path steering $z_0$ to $\gamma(s)$. 
Hence, we introduce the \textit{value function}
\begin{equation}\label{defPSI}
\Psi(p_0,y_0,t_0;p_1,y_1,t_1):= \inf_{\omega \in L^2([0,T])}\Phi(\omega),
\end{equation} 
where the infimum is taken over all the $\L$-admissible paths steering $z_0:=(p_0,y_0,t_0) \in \R^3$ to $z_1:=(p_1,y_1,t_1) \in \R^3$.
Then, as a straightforward consequence of Proposition \ref{prop-Phi}, we obtain 
\begin{equation}\label{sol-Psi}
	f(\gamma(s))\leq M^{\frac{\Psi(p_0,y_0,t_0;p(s),y(s),t(s))}{k_0^2}+1} f(z_0),
\end{equation}
whenever $f$ satisfies the assumptions of Proposition \ref{prop-Phi}. 
As it will be clear in the following of this section, equation \eqref{sol-Psi} is a key step in proving the lower bound for the fundamental solution of $\L$.
Thus, in order to characterize the minimizing cost $\Psi$, and hence to obtain the
best exponent in 
\eqref{bound-phi}, we formulate the natural optimal control problem, i.e. we consider the function $\omega$ as the \textit{control} of the path $\gamma$ in \eqref{eq-admiss} and we look for the one 
minimizing the \textit{total cost} $\Phi$ defined in \eqref{total-cost}. 
As observed above, given a solution to \eqref{eq-admiss}-\eqref{eq-admiss-pt}, the same control steers $(0,0,0)$ to $(p_0,y_0,t_0)^{-1} \circ_{\mathcal{L}} (p_1,y_1,t_1)$. As the cost $\Phi$ depends on the control only, the two trajectories have the same cost. Hence,
\begin{equation*}
\Psi(p_0,y_0,t_0;p_1,y_1,t_1)=\Psi\left( 0,0,0;(p_0,y_0,t_0)^{-1} \circ_{\mathcal{L}} (p_1,y_1,t_1)\right).
\end{equation*}
As a consequence, we will fix the initial condition $(p_0,y_0,t_0)=(0,0,0)$ in \eqref{eq-admiss}-\eqref{eq-admiss-pt} and then use the invariance property to solve it with a 
general initial condition. Thus, our aim is to study the optimal control problem
\begin{align} \label{opt-con-pb} \nonumber
	&\inf_{\omega \in L^2([0,T])}\int_0^{T}\omega^2(\tau)d\tau \quad \textit{ subject to the constraint} \\ 
 	&\qquad \qquad \qquad \begin{cases}
 		p'(s)&=\omega(s)\sqrt{p^2(s)+1},  \\
 		y'(s) &=- p(s), \qquad 0\leq s \leq T,\\
 		t'(s) &=-\sqrt{p^2(s)+1},
 	\end{cases} \\ \nonumber
	&\textit{with } \, (p,y,t)(0)=(0,0,0),\quad (p,y,t)(T)=(p_1,y_1,t_1), \quad \textit{with $t_1 <0$}. \nonumber
\end{align}

To solve problem \eqref{opt-con-pb}, one possible approach could be to apply the Pontryagin Maximum Principle (see \cite[Chapter 6]{Vinter}) and to compute the Hamiltonian
\begin{equation}\label{hamiltonian}
\begin{split}
&H(p,y,t,\lambda_1,\lambda_2,\lambda_3,m_0,\omega)=\\
&\qquad\qquad\qquad\lambda_1(s) \omega(s)\sqrt{p^2(s)+1} - \lambda_2(s) p(s)-\lambda_3(s)\sqrt{p^2(s)+1}+m_0 \omega^2(s),
\end{split}
\end{equation}
where $\lambda_1$, $\lambda_2$ and $\lambda_3$ are the coordinates of the covector $\lambda$. 

We recall the first order optimality condition is ever considered to be sufficient, unless the \textit{normality condition} holds, that is 
when the Lagrange multiplier $m_0$ is not vanishing, see \cite{AS95}.
Hence, we first show the normality condition holds true in the case of our interest.

\medskip

\begin{proposition} \label{abn-ext}
	Problem \eqref{opt-con-pb} admits no abnormal extremals.
\end{proposition}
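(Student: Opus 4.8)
The plan is to show that any abnormal extremal of problem \eqref{opt-con-pb} must be trivial, which contradicts the requirement that an extremal be associated to a nonzero covector. Recall that an abnormal extremal corresponds to the case $m_0 = 0$ in the Hamiltonian \eqref{hamiltonian}. The strategy is to write down the adjoint (costate) equations $\lambda_i' = -\partial H/\partial x_i$ together with the maximality condition $\partial H/\partial \omega = 0$, specialize to $m_0 = 0$, and derive that $(\lambda_1,\lambda_2,\lambda_3) \equiv 0$, which is impossible since $(m_0, \lambda)$ cannot vanish identically.

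First I would compute the adjoint system. Since $y$ and $t$ do not appear in the right-hand side of \eqref{opt-con-pb} nor in $H$ (the dynamics depend only on $p$), we immediately get $\lambda_2' = -\partial H/\partial y = 0$ and $\lambda_3' = -\partial H/\partial t = 0$, so $\lambda_2 \equiv c_2$ and $\lambda_3 \equiv c_3$ are constants. The remaining adjoint equation is
\begin{equation*}
 \lambda_1'(s) = -\frac{\partial H}{\partial p} = -\lambda_1(s)\omega(s)\frac{p(s)}{\sqrt{p^2(s)+1}} + c_2 + c_3 \frac{p(s)}{\sqrt{p^2(s)+1}}.
\end{equation*}
Next, in the abnormal case $m_0 = 0$, the stationarity condition $\partial H/\partial \omega = 0$ reads $\lambda_1(s)\sqrt{p^2(s)+1} = 0$ for a.e.\ $s$, and since $\sqrt{p^2+1} \ge 1$ we conclude $\lambda_1 \equiv 0$, hence also $\lambda_1' \equiv 0$. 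Substituting $\lambda_1 \equiv 0$ into the $\lambda_1'$ equation gives $c_2 + c_3\, \varphi(p(s)) = 0$ for a.e.\ $s$, where $\varphi(p) = p/\sqrt{p^2+1}$ as in \eqref{eq-varphi}.

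The key step is then to exploit the nontriviality of the trajectory. Along any admissible path steering $(0,0,0)$ to a point with $t_1 < 0$, the function $s \mapsto p(s)$ cannot be constant unless $p \equiv 0$; but even in that degenerate case one checks the argument still closes, while if $p$ is non-constant then $\varphi(p(s))$ takes at least two distinct values, forcing $c_3 = 0$ and then $c_2 = 0$. In the case $p \equiv 0$ we have $y_1 = 0$ and $t_1 = -T < 0$, and the relation $c_2 + c_3\varphi(0) = c_2 = 0$ leaves $c_3$ free; here one must invoke the transversality/nontriviality condition more carefully — since the final point is fixed there is no transversality constraint, but $\lambda_1 \equiv 0$, $c_2 = 0$ together with $m_0 = 0$ would leave only $\lambda_3 = c_3$, and one rules this out by observing that with $\omega \equiv 0$ the control is not even a critical point of the cost unless we also track that an abnormal extremal must have the covector non-vanishing \emph{and} satisfy the Pontryagin conditions, which at $p\equiv 0$, $\lambda_1 \equiv 0$ forces looking one order deeper. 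I expect the main obstacle to be precisely this degenerate branch $p \equiv 0$: handling it cleanly requires either a careful second-order (Goh-type) condition or the observation that the linearized control system about the zero trajectory is already controllable in the $(p,y,t)$ directions — indeed $\partial_\omega(\dot p, \dot y, \dot t)$ together with its iterated brackets along $p\equiv 0$ spans $\R^3$ by the Hörmander condition \eqref{hormander}, so no nonzero covector can annihilate the attainable directions, giving the contradiction. Once that linearization-plus-bracket argument is in place, the proof concludes that every extremal is normal, i.e.\ problem \eqref{opt-con-pb} admits no abnormal extremals.
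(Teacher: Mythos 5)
Your reduction via the Pontryagin Maximum Principle follows the same route as the paper: with $m_0=0$ the maximality condition forces $\lambda_1\equiv 0$, the adjoint equations give $\lambda_2\equiv c_2$, $\lambda_3\equiv c_3$ constant, and $\lambda_1'\equiv 0$ yields $c_2+c_3\varphi(p(s))=0$ with $\varphi$ as in \eqref{eq-varphi}; when $p$ is non-constant this kills $c_3$ and then $c_2$, as desired. You are also right that the branch $p\equiv 0$, $\omega\equiv 0$ is the delicate one. However, the way you close that branch does not work. The claim that ``$\partial_\omega(\dot p,\dot y,\dot t)$ together with its iterated brackets spans $\R^3$ by \eqref{hormander}, so no nonzero covector can annihilate the attainable directions'' conflates two different notions: H\"ormander's rank condition gives accessibility of the nonlinear system, whereas abnormality is a statement about the differential of the endpoint map along the given trajectory, i.e.\ about its first-order linearization. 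Along $p\equiv 0$ the linearization of \eqref{opt-con-pb} is $\delta p'=v$, $\delta y'=-\delta p$, $\delta t'=-\varphi(p)\,\delta p\equiv 0$, so its reachable directions lie in $\{\delta t=0\}$, and the constant covector $(\lambda_1,\lambda_2,\lambda_3)=(0,0,c_3)$ with $m_0=0$ satisfies every condition you have imposed (maximality, adjoint equations, $c_2=0$). Bracket-generating systems do carry abnormal extremals in general (sub-Riemannian geometry abounds with examples), so this step is a genuine gap; moreover the aside that $\omega\equiv 0$ ``is not even a critical point of the cost'' is false, since the zero control is the global minimizer of the cost.

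What actually disposes of this branch is a condition you did not use: in \eqref{opt-con-pb} the terminal value of $t$ is prescribed but the horizon $T$ is not fixed a priori (it is determined by the path through $t(T)=t_1$), so the free-final-time version of the Maximum Principle applies and requires the maximized Hamiltonian to vanish along the extremal. On the branch $\lambda_1\equiv 0$, $p\equiv 0$, $c_2=0$, the Hamiltonian \eqref{hamiltonian} with $m_0=0$ reduces to $-c_3\sqrt{p^2+1}=-c_3$, so $H\equiv 0$ forces $c_3=0$ and the whole covector $(\lambda_1,\lambda_2,\lambda_3,m_0)$ vanishes, the desired contradiction. For what it is worth, the paper's own computation also glosses over this case: differentiating $c_2+c_3\varphi(p(s))=0$ gives $c_3\,\omega(s)/(p^2(s)+1)=0$, which yields $c_3=0$ only when $\omega\not\equiv 0$. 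So your instinct to isolate the degenerate branch was sound, but it must be closed by the vanishing-Hamiltonian (transversality in time) condition, not by a controllability or bracket argument.
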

\begin{proof}
We argue by contradiction by assuming $m_0=0 $ in \eqref{hamiltonian}. 
Given this choice of $m_0$, \eqref{hamiltonian} now reads as follows
\begin{equation*}
	H(p,y,t,\lambda_1,\lambda_2,\lambda_3,0,\omega)=\lambda_1(s) \omega(s)\sqrt{p^2(s)+1}- \lambda_2(s) p(s)-\lambda_3(s)\sqrt{p^2(s)+1}.
\end{equation*}
In this case, the maximization of the Hamiltonian reads as follows
\begin{equation*}
\frac{\p H}{\p \omega}(p,y,t,\lambda_1,\lambda_2,\lambda_3,0,\omega)=\lambda_1(s)\sqrt{p^2(s)+1}=0 \quad \Rightarrow \quad \lambda_1(s)=0, \quad \forall s \in [0,T].
\end{equation*}
Moreover, owing to $\lambda_1(s)=0$, for every $s \in [0,T]$ there holds
\begin{equation*}
\lambda_1'(s)=-\frac{\p  H}{\p p}(p,y,\lambda_1,\lambda_2,\lambda_3,0,\omega)=\lambda_2(s)+\frac{\lambda_3(s)p(s)}{\sqrt{p^2(s)+1}}=0 \quad \Rightarrow \quad \lambda_2(s)=-\frac{\lambda_3(s)}{\sqrt{p^2(s)+1}}.
\end{equation*}
Additionally, as $\lambda_2'(s)=-\frac{\p H}{\p y}=0$ and $\lambda_3'(s)=-\frac{\p H}{\p t}=0$, we directly compute $\lambda_2'(s)$ and we obtain
\begin{equation*}
\lambda_2'(s)=-\frac{\lambda_3(s)}{\left( p^2(s)+1\right)^{3/2}}=0 \quad \Rightarrow \quad \lambda_3(s)=0\quad \Rightarrow \quad \lambda_2(s)=0, \quad \forall s \in [0,T].
\end{equation*}
Thus, we conclude that 
\begin{equation*}
(\lambda_1(s),\lambda_2(s),\lambda_3(s),m_0)=(0,0,0,0), \quad \forall s \in [0,T],
\end{equation*}
which contradicts the fact that $(\lambda_1(s),\lambda_2(s),\lambda_3(s),m_0)$ is never vanishing. 
\end{proof}

Since no abnormal extramals occur, we choose $m_0=-\frac{1}{2}$ and we compute the optimal control $\omega^*$ as the unique minimizer of $H(p,y,t,\lambda_1,\lambda_2,\lambda_3,-\frac{1}{2},\omega)$, i.e.
\begin{equation}\label{opt-control}
\omega^*(s)=\lambda_1(s)\sqrt{p^2(s)+1}.
\end{equation}
As a consequence, the maximized Hamiltonian $H^*$ is
\begin{equation}
H^*(p,y,t,\lambda_1,\lambda_2,\lambda_3,-\frac{1}{2},\omega^*)=\frac{1}{2}\lambda^2_1(s) (p^2(s)+1)- \lambda_2(s) p(s)-\lambda_3(s)\sqrt{p^2(s)+1},
\end{equation}
and the corresponding Hamiltonian system reads as follows
\begin{equation}\label{hamil-system}
 	\begin{cases}
 		p'(s)&=\lambda_1(s)\left(p^2(s)+1\right),   \\
 		y'(s) &= -p(s),\\
 		t'(s)&=-\sqrt{p^2(s)+1}\\
 		\lambda_1'(s)&=-p(s)\lambda_1^2(s)+\lambda_2(s)+\frac{\lambda_3(s)}{\sqrt{p^2(s)+1}},\\
 		\lambda_2'(s)&=0\\
 		\lambda_3'(s)&=0.
 	\end{cases}
\end{equation}
We observe that, from the last equation in \eqref{hamil-system}, it follows  
$$
	\lambda_2(s)=c_2, \, \, \lambda_3(s)=c_3, \qquad \forall s \in [0,T].
$$ 
Moreover, we choose the parameter $k:=\lambda_1(0)$ as the initial condition for the first extremal, which is the unique solution to \eqref{hamil-system}, with initial condition 
$$
	(p,y,t,\lambda_1,\lambda_2,\lambda_3)(0)=(0,0,0,k,c_2,c_3).
$$ 
Furthermore, as the Hamiltonian is a constant of motion, we set
\begin{equation}
E:=\lambda_1^2(s)\left( p^2(s)+1\right)-2\lambda_2(s){p(s)}-2\lambda_3(s)\sqrt{p^2(s)+1}=k^2-2c_3.
\end{equation}
Moreover, in virtue of \eqref{opt-control} and equations $y'(s)=-p(s)$, $t'(s)=-\sqrt{p^2(s)+1}$, we can compute the cost for extremals as follows
\begin{equation}
\begin{split}
C(\omega(\cdot))&=\int_0^{T}\omega^2(\tau)d\tau=\int_0^{T}\lambda_1^2(\tau)\left(p^2(\tau)+1 \right)
d\tau\\
&=\int_0^{T}\left( E-2c_2 y'(\tau)-2c_3t'(\tau)\right) d\tau =ET-2c_2y_1-2c_3 t_1.
\end{split}
\end{equation}

\begin{remark}
Since solving analytically \eqref{hamil-system} is a real challenge, we are not able to further proceed in our characterization of the optimal control. 
For this reason, the statement of Theorem \ref{boundsL} explicitly reports the value function $\Psi$. In the future, it will be interesting to study this problem from a numerical perspective, as already proposed in \cite{PaPo} for the pricing problem for Asian Options.   
\end{remark}

\setcounter{equation}{0}\setcounter{theorem}{0}
\section{Proof of Theorem \ref{boundsL}} \label{setting}
In this section we prove a lower bound for the fundamental solution $\Gamma$ of $\L$.  We follow the approach proposed in \cite{CPR}, where an analogous result is proved about an operator arising in Finance. A key tool in this argument is a lower bound for a Green function $G$ for operator $\widetilde{\K}$ introduced in \eqref{kolmo}. 

First of all, we consider the functions $a$ and $b$ in \eqref{def-ab} and we modify them for $|x| > \frac12$ in order to have ontinuous coefficients satisfying assumption \textbf{(H)}. It is sufficient to set
\begin{equation} \label{def-ab-modif}
\begin{split}
a(x,y,t) & =  \left(1 - x^2\right)^{5/2} \quad \text{and} \quad b(x,y,t) = -2x \left(1 - x^2\right)^{3/2}, 
\quad \text{for} \quad  - \tfrac{1}{2} \le x \le \tfrac{1}{2}, \\
a(x,y,t) & =  \left(\tfrac{3}{4} \right)^{5/2} \quad \text{and} \quad b(x,y,t) = -\textrm{sign}(x) \left(\tfrac{3}{4}\right)^{3/2}, 
\quad \text{for} \quad  |x| \ge \tfrac{1}{2}.
\end{split}
\end{equation}
Then, \cite[Theorem 1.1]{PolidoroParametrix} provides us with a fundamental solution $\Gamma_{\widetilde{\K}}$  
of the Kolmogorov operator $\widetilde{\K}$ introduced in \eqref{kolmo}. We are now in a position to define a Green function for operator $\widetilde{\K}$ in a suitable cylinder $\mathcal{H}$ defined as follows. 
\begin{equation*}
	\mathcal{H} = \mathcal{S} \times ]0,T[, \qquad {\text{with }}\quad\mathcal{S} = B((1,0), 3/2)  \cap  B((-1,0), 3/2),
\end{equation*}
where $B((x_0,w_0), r)$ denotes the the Euclidean ball of $\R^2$ centered at $(x_0,w_0)$ and of radius $r$, and $T$ is a positive constant. In \cite[Section 4]{PODF} it is proved that the Dirichlet problem for $\widetilde{\K}$ is well-posed on $\mathcal{H}$, i.e. for every bounded continuous function $g$ defined on $\mathcal{H}$ and for every bounded continuous function $\varphi$ defined on $\partial \mathcal{H}$, there exists a unique classical solution $f$ to equation $\widetilde{\K} f = g$ in $\mathcal{H}$. Moreover, $f$ attains continuously the boundary condition at every point of the parabolic boundary $\p_P \mathcal{H}$ of $\mathcal{H}$, that is
\begin{align*}
	\p_P \mathcal{H} = ( \mathcal{S} \times \{ 0 \} ) \cup ( \p \mathcal{S} \times [0,T]).
\end{align*}
The Green function for $\widetilde{\K}$ on $\mathcal{H}$ is defined as the function $G: \overline{\mathcal{H}} \times \mathcal{H} \to [0, + \infty[$ such that
\begin{align*}
	G(x,y,t; \xi, \eta, \tau) := \Gamma_{\widetilde{\K}} (x,y,t; \xi, \eta, \tau) - h(x,y,t; \xi, \eta, \tau),
\end{align*}
where $h(x,y,t; \xi, \eta, \tau)$ is the solution to the Dirichlet problem:
\begin{align} \label{dirichlet-h}
	\begin{cases}
		\widetilde{\K} f = 0 \qquad &\text{in } \mathcal{H},\\
		f = \Gamma_{\widetilde{\K}} (x,y,t; \xi, \eta, \tau) \qquad &\text{in } \p_P \mathcal{H}.
	\end{cases}
\end{align}
We now recall the most important property of function $G$. For every $g \in C^{\infty}_0(\mathcal{H})$ and $\phi \in C^{\infty}_0(\mathcal{S})$, the function
\begin{align*}
	v(x,y,t) := \int \limits_{\mathcal{H}} G(x,y,t; \xi, \eta, \tau) g(\xi, \eta, \tau) \, d \xi \, d \eta \, d \tau +
			 \int \limits_{\mathcal{S}} G(x,y,t; \xi, \eta, 0) \phi(\xi, \eta) \, d \xi \, d \eta
\end{align*}
is a classical solution to the Dirichlet problem 
\begin{align} \label{property-G}
	\begin{cases}
		\widetilde{\K} f = - g \qquad &\text{in } \mathcal{H},\\
		f = \phi \qquad &\text{in } \mathcal{S} \times \{ 0 \}, \\
		f=0 \qquad &\text{in } \p \mathcal{S} \times [0,T]. 
	\end{cases}
\end{align}
We point out that the above property is stated in \cite[Section 4]{PODF} only for $\varphi = 0$.  The validity of \eqref{property-G} follows from well-known properties of the fundamental solution $\Gamma_{\widetilde{\K}}$. We finally recall the statement of a local lower bound for the Green function given in \cite[Theorem 4.3]{PODF}.
\begin{lemma} \label{lower-green}
There exists two positive constants $c_G>0$ and $\delta_G \in ]0, 1]$, only depending on the constants appearing in assumption \textbf{(H)}, such that 
	\begin{equation*}
		G(0, 0, t; 0, 0, 0) \ge \frac{c_G}{t^{2}}, \quad \forall t \in \left] 0, \delta_G \right[.
	\end{equation*}
\end{lemma}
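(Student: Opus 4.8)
\textbf{Proof plan for Lemma \ref{lower-green}.}

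The plan is to reduce the statement to a known Gaussian-type lower bound for the fundamental solution $\Gamma_{\widetilde{\K}}$ of the full-space Kolmogorov operator $\widetilde{\K}$, and then to show that the correction term $h$ solving the Dirichlet problem \eqref{dirichlet-h} is negligible on the relevant range of times. First I would recall from \cite{PolidoroParametrix} that, thanks to assumption \textbf{(H)} for the modified coefficients in \eqref{def-ab-modif}, the fundamental solution $\Gamma_{\widetilde{\K}}$ enjoys a two-sided Gaussian bound comparable to the fundamental solution of the constant-coefficient Kolmogorov operator; evaluated along the diagonal degenerate direction this gives, for some $c>0$,
\begin{equation*}
  \Gamma_{\widetilde{\K}}(0,0,t;0,0,0) \ge \frac{c}{t^{2}}, \qquad \forall t \in \, ]0,T[,
\end{equation*}
where the exponent $2$ is the homogeneous dimension associated with the dilation group of the Kolmogorov operator in $\R^3$ (one unit from the $x$-variable plus three from $y$, divided by the parabolic scaling of $t$). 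This is the ``free'' part of the bound.

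Next I would estimate $h(0,0,t;0,0,0)$. By the maximum principle, $0 \le h \le \sup_{\p_P \mathcal{H}} \Gamma_{\widetilde{\K}}(\cdot;0,0,0)$; more precisely, $h(x,y,t;0,0,0)$ is the harmonic measure integral of $\Gamma_{\widetilde{\K}}(\cdot\,;0,0,0)$ over $\p_P \mathcal{H}$ seen from $(x,y,t)$. The key point is that the pole $(0,0,0)$ lies in the interior of the spatial section $\mathcal{S}$ (indeed $(0,0) \in \mathcal{S}$, with positive distance to $\p \mathcal{S}$), so on the lateral boundary $\p \mathcal{S} \times [0,T]$ the function $\Gamma_{\widetilde{\K}}(\cdot\,;0,0,0)$ stays bounded; and on the bottom $\mathcal{S} \times \{0\}$ one uses that $\Gamma_{\widetilde{\K}}(x,y,0^+;0,0,0)$ is a $\delta$-type object concentrated at the origin, which is an interior point, so the contribution seen from the point $(0,0,t)$ for small $t$ is exponentially small — one gets a bound of the form $h(0,0,t;0,0,0) \le C\exp(-c_0/t)$ for $t$ small. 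Comparing with the polynomial lower bound $c/t^2$ for $\Gamma_{\widetilde{\K}}$, we can choose $\delta_G \in \,]0,1]$ small enough that $h(0,0,t;0,0,0) \le \tfrac{1}{2}\,\Gamma_{\widetilde{\K}}(0,0,t;0,0,0)$ for all $t \in \,]0,\delta_G[$, whence $G(0,0,t;0,0,0) \ge \tfrac{c}{2 t^{2}}$ and we set $c_G = c/2$.

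The main obstacle is making the estimate on $h$ rigorous and uniform: one needs the exponential smallness of the harmonic measure of $\p_P \mathcal{H}$ as seen from the near-diagonal point $(0,0,t)$ with $t\to 0^+$, which requires a barrier argument at the parabolic boundary adapted to the degenerate (Kolmogorov) geometry rather than the classical parabolic one. This is essentially the argument already carried out in \cite[Theorem 4.3]{PODF} (and in the analogous step of \cite{CPR}), so I would invoke that reference for the construction of the barrier and the resulting Gaussian-type bound on $h$, and only check that the coefficients \eqref{def-ab-modif} satisfy the structural hypotheses needed there and that the origin is an interior point of $\mathcal{S}$ at positive distance from $\p \mathcal{S}$, which is immediate from the definition $\mathcal{S} = B((1,0),3/2)\cap B((-1,0),3/2)$.
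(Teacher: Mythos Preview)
The paper does not prove this lemma at all: it merely recalls the statement from \cite[Theorem~4.3]{PODF} and uses it as a black box. Your plan is a faithful outline of the argument that actually lives in that reference---two-sided Gaussian bounds for $\Gamma_{\widetilde{\K}}$ plus a maximum-principle/barrier estimate showing that the corrector $h$ is exponentially small near the pole---and you yourself end by invoking \cite[Theorem~4.3]{PODF}, so there is no discrepancy to report.
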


We are now in a position to prove a local lower bound for the fundamental solution $\Gamma$ of the relativistic operator $\L$.
The proof of this result is an adaptation of \cite[Lemma 4.3]{CPR} to the case of our interest.
\begin{lemma} \label{lemma-gamma}
For every positive constant $T$, there exists a psitive constant $\kappa_T$,
only depending on the constants appearing in assumption { \textbf{(H)}}, such that
\begin{equation*}
		\Gamma (0,0, t; 0, 0, 0) \ge \frac{\kappa_T}{t^2} , \qquad \forall t \in \left]0, T \right[.
\end{equation*}	
\end{lemma}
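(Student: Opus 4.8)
The plan is to deduce the statement from the local lower bound for the Green function $G$ of $\widetilde{\K}$ given in Lemma \ref{lower-green}, following the scheme of \cite[Lemma 4.3]{CPR}, and then to remove the restriction to small times by a Harnack chain argument along the time axis. Write $\mathcal{M}$ for the operator appearing in \eqref{kolmorel}. Unwinding the computation in the proof of Lemma \ref{equivalence} one obtains the pointwise identity
\begin{equation*}
 (\L f)(p,y,t)=\sqrt{p^2+1}\,(\mathcal{M}u)(\varphi(p),y,t),\qquad u(x,y,t):=f(\psi(x),y,t),
\end{equation*}
so that $\L f=0$ is equivalent to $\mathcal{M}u=0$; moreover, by \eqref{def-ab-modif} the coefficients of $\mathcal{M}$ coincide with those of $\widetilde{\K}$ on $\{|x|\le\tfrac12\}$, and since $\mathcal{S}\subset\,]-\tfrac12,\tfrac12[\times\R$ we have $\mathcal{M}=\widetilde{\K}$ on the whole cylinder $\mathcal{H}$.

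First I would pull $G$ back through the diffeomorphism $\Xi(p,y,t):=(\varphi(p),y,t)$ and set $v(p,y,t):=G(\varphi(p),y,t;0,0,0)$ on $\mathcal{H}':=\Xi^{-1}(\mathcal{H})$. By the identity above and the properties of $G$, the function $v$ solves $\L v=0$ in $\mathcal{H}'$, it vanishes on the lateral part $\p\mathcal{S}'\times[0,T]$ of $\p_P\mathcal{H}'$ and on $(\mathcal{S}'\setminus\{0\})\times\{0\}$, and near the boundary point $(0,0,0)$ it carries the fundamental solution singularity of $\L$ (since $\varphi(0)=0$ and $\varphi'(0)=1$, the change of variable is tangent to the identity at the pole, so no Jacobian correction is needed there, and $G\le\Gamma_{\widetilde{\K}}$ because $G=\Gamma_{\widetilde{\K}}-h$ with $h\ge0$). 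Consequently $D:=\Gamma(\cdot\,;0,0,0)-v$ is a classical solution of $\L D=0$ in $\mathcal{H}'$, it is nonnegative on $\p_P\mathcal{H}'\setminus\{(0,0,0)\}$ because $\Gamma\ge0$ there while $v$ vanishes away from the pole, and it stays bounded near $(0,0,0)$ since $v$ and $\Gamma(\cdot\,;0,0,0)$ have the same leading singularity there. The maximum principle for $\L$ on $\mathcal{H}'$, with the single exceptional boundary point $(0,0,0)$ harmless thanks to this boundedness, then yields $D\ge0$, that is $\Gamma(p,y,t;0,0,0)\ge G(\varphi(p),y,t;0,0,0)$ on $\mathcal{H}'$; evaluating at $(p,y)=(0,0)$ and invoking Lemma \ref{lower-green} gives
\begin{equation*}
 \Gamma(0,0,t;0,0,0)\ \ge\ G(0,0,t;0,0,0)\ \ge\ \frac{c_G}{t^2},\qquad t\in\,]0,\min\{\delta_G,T\}[\,.
\end{equation*}

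To cover the remaining times, put $s_0:=\tfrac12\min\{\delta_G,T\}$ and note that $f(p,y,t):=\Gamma(p,y,t;0,0,0)$ is, by Definition \ref{fun-sol-def}, a nonnegative classical solution of $\L f=0$ in $\{t>0\}$. For $s_0\le a<b<T$ I would apply Theorem \ref{harnack-relativistic-vero} with $z_0=(0,0,b)$: choosing $r\le\min\{\tfrac12,\sqrt{s_0/2}\}$ so that $H^{\mathcal{L}}_r(0,0,b)\subset\{t>0\}$, one has $(0,0,b-t')\in S^{\mathcal{L}}_{\theta r}(0,0,b)$ for every $t'\in[(\theta r)^2/2,(\theta r)^2]$, whence $f(0,0,b-t')\le C_H\,f(0,0,b)$. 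Iterating, any $t\in[s_0,T[$ is joined to $s_0$ by a chain of at most $N=N(T,\L)$ such steps, so that $\Gamma(0,0,t;0,0,0)\ge C_H^{-N}\,\Gamma(0,0,s_0;0,0,0)\ge C_H^{-N}c_G/s_0^2=:c>0$; since $1/t^2\le 1/s_0^2$ on $[s_0,T[$, this constant bound is in particular at least $c\,s_0^2/t^2$ there. Together with the small-time estimate, this proves the claim with $\kappa_T:=\min\{c_G,\,c\,s_0^2\}$, which depends only on $T$ and on the constants in \textbf{(H)}.

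I expect the main obstacle to be the verification, in the second step, that $v$ has exactly the fundamental solution singularity of $\L$ at the pole, so that $D$ is genuinely a bounded solution and the maximum principle applies: this requires keeping track of both the Jacobian $\varphi'$ and the conformal factor $\sqrt{p^2+1}$ produced by $\Xi$, together with the precise decomposition $G=\Gamma_{\widetilde{\K}}-h$ near the boundary pole, and checking that $\mathcal{S}$ (hence $\mathcal{S}'$) indeed lies within the region $\{|x|<\tfrac12\}$ where $\mathcal{M}$ and $\widetilde{\K}$ agree.
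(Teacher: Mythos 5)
Your overall scheme (reduce to the lower bound for $G$ in Lemma \ref{lower-green} by proving $\Gamma\ge G$, then extend to all $t<T$) is the same as the paper's, and your large-time step is fine: the Harnack chain along the time axis built from Theorem \ref{harnack-relativistic-vero} is a legitimate, in fact more self-contained, substitute for the paper's one-line appeal to continuity and strict positivity of $\Gamma$. The problem is the mechanism you use for the key inequality $\Gamma\ge G$. You compare the two singular kernels directly, setting $D:=\Gamma(\cdot;0,0,0)-G(\varphi(\cdot),\cdot,\cdot;0,0,0)$ and invoking a maximum principle on $\mathcal{H}'$ with the exceptional boundary point $(0,0,0)$, which forces you to know that $D$ is bounded near the pole, i.e.\ that $\Gamma$ and the pulled-back Green function have ``the same leading singularity''. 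This is a genuine gap, not a routine verification: the paper's Definition \ref{fun-sol-def} gives no asymptotics of $\Gamma$ at the pole at all, and even granting that $h$ stays bounded, the boundedness of the difference of the fundamental solutions of $\mathcal{M}$ and of the modified operator $\widetilde{\K}$ (two operators that merely coincide on $\{|x|\le\tfrac12\}$) is a localization/parametrix-level statement that is nowhere available in the paper and would itself require a substantial argument. The maximum principle with one exceptional parabolic boundary point also needs justification (a barrier or removable-singularity argument) in this degenerate setting. You correctly identify this as ``the main obstacle'', but it is left unresolved, and it is exactly the point the argument hinges on.

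The paper's proof is designed to avoid this difficulty entirely: for an arbitrary nonnegative $\phi\in C^{\infty}_0(\mathcal{S})$ one sets $u=\int\Gamma(\cdot;\xi,\eta,0)\phi\,d\xi\,d\eta$ and $v=\int G(\cdot;\xi,\eta,0)\phi\,d\xi\,d\eta$; by Definition \ref{fun-sol-def} and \eqref{property-G} both are classical (bounded, non-singular) solutions of the same equation in $\mathcal{H}$, with $u\ge 0$ on the lateral boundary where $v=0$ and with the same datum $\phi$ at $t=0$, so the comparison principle gives $u\ge v$, and the arbitrariness of $\phi$ yields $\Gamma\ge G$ pointwise — no information on the singularity of $\Gamma$ or $G$ at the pole is ever needed. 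If you want to salvage your route, you should either prove the boundedness of $D$ near the pole (which would require pole asymptotics for $\Gamma$ that the paper does not establish) or replace the direct comparison by the mollified comparison just described.
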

\begin{proof}
In order to prove our claim, we just need to show that there holds
\begin{equation}\label{gammagreen}
		\Gamma (p,y,t; \xi, \eta, \tau) \ge G(p,y,t; \xi, \eta, \tau) \qquad \forall (p,y,t; \xi, \eta, \tau) \in \overline{ \mathcal{H}} \times  \mathcal{H}.
\end{equation}
Indeed, if \eqref{gammagreen} holds true, then the result for $0 < t < \delta_G$ is a straightforward consequence of Lemma \ref{lower-green}. The result for any $T > \delta_G$ follows from the fact that $\Gamma$ is a continuous strictly positive function.

Thus, it is only left to prove inequality \eqref{gammagreen}. To this end, for every non-negative $\phi \in C^{\infty}_0(\mathcal{S})$ and for every $(p,y,t) \in \overline{\mathcal{H}}$, we set
\begin{align*}
	v(p,y,t) &:= \int \limits_{\mathcal{S}} G(p,y,t; \xi, \eta, 0) \phi(\xi, \eta) \, d \xi \, d \eta, \\
	u(p,y,t) &:= \int \limits_{\mathcal{S}} \Gamma(p,y,t; \xi, \eta, 0) \phi (\xi, \eta) \, d \xi \, d \eta,
\end{align*}
where $\Gamma$ is the fundamental solution of $\L$ and $G$ is the Green function of $\widetilde{\K}$ in $\mathcal{H}$.
By Definition \ref{fun-sol-def} and \eqref{property-G}, both $v$ and $u$ are solution to $\L f = 0$ in $\mathcal{H}$, 
or equivalently to $\widetilde{\K} f = 0$.
Then by \eqref{property-G} and comparison principle we find $u \ge v$ in $\mathcal{H}$. Hence, this implies
\begin{align*}
	\int \limits_{\mathcal{S}} \left( \Gamma (p,y,t; \xi, \eta, 0) - G(p,y,t; \xi, \eta, 0) \right) \phi (\xi, \eta) \, d \xi \, d \eta \ge 0
	\end{align*}
for every non-negative $\phi \in C^\infty_0(\mathcal{S})$ and for every $(p,y,t) \in \overline{\mathcal{H}}$. This concludes the proof. 
\end{proof}

\bigskip

\noindent
\textbf{Proof of Theorem \ref{boundsL}.}
By choosing $T_0=0$ and $T= T_1=t_0$, we apply Proposition \ref{prop-Phi} and Lemma \ref{lemma-gamma}
and obtain
\begin{align*}
	\Gamma (p_0,y_0,t_0; 0,0,0) 
	&\ge C_H^{- \frac{\Psi(p_0,y_0,t_0; 0,0,(1 - \theta^2)t_0)}{k_0^2}-1} \Gamma \left(0,0, (1 - \theta^2)t_0; 0, 0, 0 \right)  \\
	&\ge C_H^{- \frac{\Psi(p_0,y_0,t_0; 0,0,(1 - \theta^2)t_0)}{k_0^2}-1} \frac{\kappa_T }{(1 - \theta^2)^2t_0^{2}}
\end{align*}
for every $(p_0,y_0,t_0) \in \R^3$ such that 
$t_0 \leq \frac{\theta^2}{2}$. This proves Theorem \ref{boundsL} for $(p_1, y_1, t_1)=(0,0,0)$, where 
$$
	c_{T}=  C_H^{-1} \frac{\kappa_T}{(1 - \theta^2)^2}.
$$ 
The statement for a general point $(p_1, y_1, t_1) \in \R^3$ follows from the traslation invariance of $\L$ 
with respect to \eqref{group-law}.

\appendix
\section{Higher dimensional case}\label{sec-Horm}
\subsection{H\"ormander's operators}
In this section, we check that the $d$-dimensional operator $\L$ in \eqref{oprel} writes in the form \eqref{hormander-op} and satisfies the H\"ormander's condition \eqref{hormander}.

We first explain how to choose the vector fields $X_1, \dots, X_d$ in \eqref{vector-fields}. As a first step, we observe that equation \eqref{oprel} can be written in its non-divergence form
\begin{equation} \label{oprel-nd}
    \L f (p,y,t) =  \textrm{Tr} \left( \left( \mathbb{I}_d+ p\otimes p \right) \nabla^2_p f\right) + 
    d \, p \cdot \nabla_p f - Y f = 0.
\end{equation}

We consider the $d \times d$ symmetric matrix $\mathbf{X}$ 
\begin{equation*} \label{matrix}
    \mathbf{X}(p) = \left( X_1(p), \dots, X_d(p) \right),
\end{equation*}
whose columns are the coefficients of the vector fields $X_1, \dots, X_d$.
We have 
\begin{equation*}
	\sum_{j=1}^d X_j f = \mathbf{X} \, \nabla_p f, \qquad \text{and} \qquad
	\sum_{j=1}^d X_j^2 f = \mathbf{X}^2 \, \nabla^2_p f + \tilde c \cdot \nabla_p f,
\end{equation*}
for some vector $\tilde c = \tilde c(p)$. We then determine $\mathbf{X}$ such that $\mathbf{X}^2 = \mathbb{I}_d+ p \otimes p$. To do this, we recall that, for any given $q \in \R^d$, we have 
\begin{equation*} 
    \left( \mathbb{I}_d+ q \otimes q \right)^2 = \mathbb{I}_d+ (2 + |q|^2) \, q \otimes q.
\end{equation*}
Then, 
\begin{equation} \label{id+oqquadro}
    \left( \mathbb{I}_d+ q \otimes q \right)^2 = \mathbb{I}_d+ p \otimes p 
\end{equation}
if we choose 
\begin{equation} \label{q=alpha(p)}
    q = \alpha \, p \qquad \text{for some $\alpha$ such that} \qquad (2 + |q|^2) |q|^2 = |p|^2.
\end{equation}
Direct computations show that the second equality in \eqref{q=alpha(p)} implies that
\begin{equation} \label{relq}
    1 + |q|^2 = \sqrt{|p|^2+1} 
\end{equation}
and therefore
\begin{equation}\label{alpha}
\alpha = \tfrac{1}{\sqrt{1 + \sqrt{|p|^{2} +1}}}.
\end{equation}
Hence, by choosing
\begin{equation*} 
 \mathbf{X} = \mathbb{I}_d+ q \otimes q, \qquad 
 q = \tfrac{1}{\sqrt{1 + \sqrt{|p|^{2} + 1}}} \, p
\end{equation*}
we find the vector fields $X_1, \dots, X_d$ introduced in \eqref{vector-fields}. Moreover, the components of $\tilde c$ are
\begin{equation} \label{vector-fields-q}
 \tilde c_j(p) = \sum_{i,k=1}^{d} \left( \delta_{ik} +q_i q_k \right) \tfrac{\p (q_j q_k) }{ \p p_i}, \quad j=1, \dots, d. 
\end{equation}
Thus, from \eqref{oprel-nd} and \eqref{id+oqquadro} we obtain the following identity
\begin{equation*} 
    \L =  \sum_{j=1}^d X_j^{2} + (d \, p - \tilde c(p)) \cdot \nabla_p - Y.
\end{equation*}
In order to conclude that $\L$ writes in the form \eqref{hormander-op}, we observe that the matrix $\mathbb{I}_d - \tfrac{1}{1 + |q|^2} \, q \otimes q$ is the inverse of $\mathbb{I}_d+ q \otimes q$.
As a consequnce, we have 
\begin{equation*} 
    \left( \mathbb{I}_d - \tfrac{1}{1 + |q|^2} \, q \otimes q \right) \left( X_1(p), \dots, X_d(p) \right) = \nabla_p.
\end{equation*}
This concludes the proof of \eqref{hormander-op}, where the vector $c(p) = \left(c_1(p), \dots, c_d(p) \right)$ is defined as
\begin{equation*} 
    c(p) = (d \, p - \tilde c(p))^T \left( \mathbb{I}_d - \tfrac{1}{1 + |q|^2} \, q \otimes q \right) 
\end{equation*}
and has smooth coefficients, in virtue of \eqref{q=alpha(p)} and \eqref{alpha}.

\medskip

\medskip

We next prove that $\L$ does satisfy the H\"ormander's condition \eqref{hormander}. We first note that the Lie algebra generated by $X_1, \dots, X_d, X_{d+1}$ agrees with the Lie algebra generated by $X_1, \dots, X_d, Y$, and then we claim that
\begin{equation}\label{hormander-Y}
	{\rm rank}\, {\rm Lie} \, \lbrace X_1,\ldots, X_d ,Y \rbrace \,(p,y,t)=2d+1, 
	\qquad \forall (p,y,t) \in \mathbb{R}^{2d+1}.
\end{equation} 
We compute the commutator $[X_j,Y]$ for $j=1, \dots, d$. We find that
\begin{equation*}
	[X_j, Y] f := X_j Y f - Y X_j f = \sum_{k=1}^{d} \left( \delta_{jk} + q_j q_k \right) \tfrac{\partial f}{\partial y_k} 
	+ p_j \tfrac{\partial f}{\partial t}.
\end{equation*}
We now consider the $(2d+1) \times (2d+1)$ matrix $\mathbf{M}$ whose columns are the coefficients of $X_1, \dots, X_d$, $[X_1, Y], \dots, [X_d, Y], Y$ and we prove that 
\begin{equation} \label{eclaim}
 \textrm{det} \, \mathbf{M} = \sqrt{|p|^{2}+1}.
\end{equation}
We have
\begin{align*}
\mathbf{M} = \Big( X_1, \dots, X_d, [X_1, Y], \dots, [X_d, Y], Y \Big) = 
\begin{pmatrix}
 \mathbb{I}_d + q \oplus q & \mathbb{O}_d & 0_d \\
 \mathbb{O}_d  & \mathbb{I}_d + q \oplus q &  p \\
 0_d^T &  p^T  & \sqrt{|p|^{2}+1}
\end{pmatrix},
\end{align*}
where $\mathbb{O}_d$ is the $d \times d$ matrix whose entries are zeros, and $0_d$ is the zero column vector of $\R^d$ 
Up to a change of basis in $\R^d$, it is not restrictive to assume that $q = |q| \, \mathbf{e}_d$, being $\mathbf{e}_d$ the $d$-th vector of the canonical basis of $\R^d$. Then the matrix ${\mathbf{M}} = $ takes the simpler form
\begin{equation*}
{\mathbf{M}} =  \begin{pmatrix}
 \mathbf{D} & 0_{2d-1} & 0_{2d-1} \\
 0_{2d-1}^T  & 1 + |q|^2 &  |p| \\
 0_{2d-1}^T & |p|  & \sqrt{ |p|^{2}+1}
\end{pmatrix},
\end{equation*}
where $\mathbf{D} = \mathbb{I}_{2d-1} + (1 + |q|^2) \, \mathbf{e}_d \otimes \mathbf{e}_d$. Thus, \eqref{eclaim} follows from the first equality in \eqref{alpha}.
\medskip

\subsection{Lorentz invariance}
The invariance with respect to Lorentz transformations is also preserved in the higher dimensional case. Indeed, it is sufficient to observe that the diffusion operator in \eqref{oprel} is the Laplace-Beltrami operator over the Riemannian manifold $\left( \R^d,g \right)$, where $g$ is the metric induced by the Minkonwski metric over the hyperboloid $\mathfrak{g}=	\lbrace (E,p): E= \sqrt{|p|^2+1} \rbrace$. We recall that the Laplace-Beltrami operator is invariant with respect to isometries. Then, the invariance with of $\L$ follows from the fact that the Lorentz transformation in the momentum component corresponds to a translation over $\mathfrak{g}$.  Moreover, the invariance of the drift term $Y$ in \eqref{oprel} follows immediately from \eqref{comp-drift}, which clearly still holds true in the higher dimensional case.

\section*{Aknowledgments}
The authors are members of “Gruppo Nazionale per l’Analisi Matematica,
la Probabilità e le loro Applicazioni (GNAMPA)” of Istituto Nazionale di Alta Matematica
(INdAM).


\begin{thebibliography}{10}

\bibitem{AS95}
A.~A. Agrachev and A.~V. Sarychev.
\newblock On abnormal extremals for {Lagrange} variational problems.
\newblock {\em J. Math. Syst. Estim. Control}, 5(1):127--130, 1995.

\bibitem{AC}
José~Antonio Alcántara and Simone Calogero.
\newblock On a relativistic fokker-planck equation in kinetic theory.
\newblock {\em Kinetic and Related Models}, 4(2):401--426, 2011.

\bibitem{AEP}
F.~Anceschi, M.~Eleuteri, and Sergio Polidoro.
\newblock {A} geometric statement of the {H}arnack inequality for a degenerate
  {K}olmogorov equation with rough coefficients.
\newblock {\em Communications in Contemporary Mathematics},
  21(doi.org/10.1142/S0219199718500578):1--17, 2018.

\bibitem{APsurvey}
Francesca Anceschi and Sergio Polidoro.
\newblock A survey on the classical theory for {Kolmogorov} equation.
\newblock {\em Matematiche}, 75(1):221--258, 2020.

\bibitem{AR-harnack}
Francesca Anceschi and Annalaura Rebucci.
\newblock A note on the weak regularity theory for degenerate kolmogorov
  equations.
\newblock {\em Journal of Differential Equations}, 341:538--588, 2022.

\bibitem{AR-funsol}
Francesca Anceschi and Annalaura Rebucci.
\newblock On the fundamental solution for degenerate kolmogorov equations with
  rough coefficients.
\newblock {\em J. Ellip. Par. Eq.}, 2022.

\bibitem{BB}
Stefano Biagi and Andrea Bonfiglioli.
\newblock A completeness result for time-dependent vector fields and
  applications.
\newblock {\em Communications in Contemporary Mathematics}, 17:1--26, 2015.

\bibitem{BoPo}
Ugo Boscain and Sergio Polidoro.
\newblock Gaussian estimates for hypoelliptic operators via optimal control.
\newblock {\em Atti Accad. Naz. Lincei, Cl. Sci. Fis. Mat. Nat., IX. Ser.,
  Rend. Lincei, Mat. Appl.}, 18(4):333--342, 2007.

\bibitem{CPR}
Gennaro Cibelli, Sergio Polidoro, and Francesco Rossi.
\newblock Sharp estimates for {G}eman-{Y}or processes and applications to
  arithmetic average {A}sian options.
\newblock {\em J. Math. Pures Appl. (9)}, 129:87--130, 2019.

\bibitem{CintiPolidoro}
Chiara Cinti and Sergio Polidoro.
\newblock Pointwise local estimates and gaussian upper bounds for a class of
  uniformly subelliptic ultraparabolic operators.

\bibitem{Debbasch}
F.~Debbasch.
\newblock Relativistic stochastic processes: A review.
\newblock {\em AIP Conf. Proc.}, 17, 2006.

\bibitem{PODF}
Marco Di~Francesco and Sergio Polidoro.
\newblock Schauder estimates, {H}arnack inequality and {G}aussian lower bound
  for {K}olmogorov-type operators in non-divergence form.
\newblock {\em Adv. Differential Equations}, 11(11):1261--1320, 2006.

\bibitem{DH2}
Jörn Dunkel and Peter Hänggi.
\newblock Relativistic brownian motion.
\newblock {\em Physics Reports}, 471(1):1--73, 2009.

\bibitem{GIMV}
Fran\c{c}ois Golse, Cyril Imbert, Cl\'{e}ment Mouhot, and Alexis~F. Vasseur.
\newblock Harnack inequality for kinetic {F}okker-{P}lanck equations with rough
  coefficients and application to the {L}andau equation.
\newblock {\em Ann. Sc. Norm. Super. Pisa Cl. Sci. (5)}, 19(1):253--295, 2019.

\bibitem{H}
Lars H{\"o}rmander.
\newblock Hypoelliptic second order differential equations.
\newblock {\em Acta Math.}, 119:147--171, 1967.

\bibitem{DH}
Dunkel J and H\"{a}nggi P.
\newblock Theory of relativistic brownian motion: the (1+3) -dimensional case.
\newblock {\em Phys Rev E Stat Nonlin Soft Matter Phys.}, 72, 2005.

\bibitem{K1}
A.~Kolmogorov.
\newblock Zufllige bewegungen. (zur theorie der brownschen bewegung.).
\newblock {\em Ann. of Math., II. Ser.}, 35:116--117, 1934.

\bibitem{LPP}
Alberto Lanconelli, Andrea Pascucci, and Sergio Polidoro.
\newblock Gaussian lower bounds for non-homogeneous kolmogorov equations with
  measurable coefficients.
\newblock {\em arXiv:1704.07307}, 2018.

\bibitem{LP}
Ermanno Lanconelli and Sergio Polidoro.
\newblock On a class of hypoelliptic evolution operators.
\newblock {\em Rend. Sem. Mat. Univ. Politec. Torino}, 52:29--63, 1994.

\bibitem{PaPo}
Stefano Pagliarani and Sergio Polidoro.
\newblock A {Yosida}'s parametrix approach to {Varadhan}'s estimates for a
  degenerate diffusion under the weak {H{\"o}rmander} condition.
\newblock {\em J. Math. Anal. Appl.}, 517(1):42, 2023.
\newblock Id/No 126538.

\bibitem{PaPo06}
Andrea Pascucci and Sergio Polidoro.
\newblock Harnack inequalities and {Gaussian} estimates for a class of
  hypoelliptic operators.
\newblock {\em Trans. Am. Math. Soc.}, 358(11):4873--4893, 2006.

\bibitem{PolidoroParametrix}
Sergio Polidoro.
\newblock On a class of ultraparabolic operators of
  {K}olmogorov-{F}okker-{P}lanck type.
\newblock {\em Matematiche}, 49:53--105, 1995.

\bibitem{V}
N.G. {Van Kampen}.
\newblock Lorentz-invariance of the distribution in phase space.
\newblock {\em Physica}, 43(2):244--262, 1969.

\bibitem{Villani}
Cédric Villani.
\newblock A review of mathematical topics in collisional kinetic theory. in
  handbook of mathematical fluid dynamics.
\newblock {\em North-Holland, Amsterdam}, 1:71--305, 2002.

\bibitem{Vinter}
R.~Vinter.
\newblock {\em Optimal control}.
\newblock 2010.

\end{thebibliography}
\end{document}